\newtheorem{theorem}{Theorem}
\newtheorem{assumption}[theorem]{Assumption}
\newtheorem{definition}[theorem]{Definition}
\newtheorem{example}[theorem]{Example}
\newtheorem{lemma}[theorem]{Lemma}
\newtheorem{proposition}[theorem]{Proposition}
\newtheorem{remark}[theorem]{Remark}
\newenvironment{proof}[1][Proof]{\textbf{#1.} }{\ \rule{0.5em}{0.5em}}
\DeclareMathOperator*{\esssup}{ess\,sup}
\DeclareMathOperator*{\essmax}{ess\,max}
\def\bP{P}
\def\cF{\mathcal{F}}
\def\bR{\mathbb{R}}
\def\cE{\mathcal{E}}
\begin{document}

\author{Elisa Mastrogiacomo\footnote{Dipartimento di Economia, Universit\`{a} dell'Insubria, via Monte Generoso 71,
21100 Varese, Italy. elisa.mastrogiacomo@uninsubria.it} \, and Emanuela Rosazza Gianin\footnote{Department of Statistics and Quantitative methods,
University of Milano Bicocca
Via Bicocca degli Arcimboldi 8, 20126 Milano, Italy.
emanuela.rosazza1@unimib.it}}
\title{Dynamic capital allocation rules via BSDEs: an axiomatic approach}

\maketitle

\abstract{
%
In this paper, we study capital allocation for dynamic risk measures, with an axiomatic approach but also by exploiting the relation between risk measures and BSDEs. Although there is a wide literature on capital allocation rules in a static setting and on dynamic risk measures, only a few recent papers on capital allocation work in a dynamic setting and, moreover, those papers mainly focus on the gradient approach.
To fill this gap, we then discuss new perspectives to the capital allocation problem going beyond those already existing in the literature. In particular, we introduce and investigate a general axiomatic approach to
dynamic capital allocations as well as an approach suitable for
risk measures induced by $g$-expectations under weaker assumptions than Gateaux differentiability.
\normalcolor}
\medskip

\noindent \textbf{Keywords:} risk measures; capital allocation; BSDE; $g$-expectation; subdifferential; gradient

\section{Introduction}
A relevant research stream related to risk measures is the capital allocation problem, dealing with the problem of sharing in a suitable way the margin required to hedge the riskiness of a position among the different sources of riskiness of the aggregate position. For static risk measures the capital allocation problem has been faced in an axiomatic way but also from an empirical or a game-theoretical point of view (see~\cite{delbaen-pisa,denault,kalkbrener,tasche,tsanakas,canna-etal,centrone-rg} and the references therein).
Although there is a wide literature on the relation between dynamic risk measures and Backward Stochastic Differential Equations (BSDEs, for short) and on capital allocation rules (CARs) in a static setting, a systematic analysis of the dynamic setting has not yet been extensively developed and only a few papers (see~\cite{boonen,cherny,kromer-overbeck-bsde,kromer-overbeck-volterra,mabitsela-etal,tsanakas-dynamic}) on capital allocation work in a dynamic setting. Even if the idea of using BSDEs (or Backward Stochastic Volterra Integral Equations) for dynamic CARs can be found already in~\cite{kromer-overbeck-bsde,kromer-overbeck-volterra,mabitsela-etal}, in these papers very specific CARs are considered and the authors mainly investigate and cover the gradient approach.
\smallskip

Motivated by the relevance of dynamic risk measures and the need of a complete study on CARs in a dynamic setting, in this paper we introduce an axiomatic approach to dynamic capital allocation as well as an approach suitable for risk measures induced by BSDES or, better, by $g$-expectations (see~\cite{peng-97}).
The main goal of the paper is, therefore, to investigate from an axiomatic point of view capital allocation rules of convex risk measures in a dynamic setting by weakening the Gateaux differentiability condition of the risk measure and going beyond the gradient approach. In particular, we generalize the axioms introduced by~\cite{kalkbrener} for capital allocations in a dynamic setting and investigate their relation and compatibility with time-consistency of dynamic risk measures.
Furthermore, when the underlying risk measure is induced by a BSDE governed by a non-smooth (but convex) driver, we introduce dynamic capital allocation rules going beyond the gradient allocation (already represented in terms of the driver in~\cite{kromer-overbeck-bsde,kromer-overbeck-volterra,mabitsela-etal}).
\smallskip

The paper is organized as follows: in the next section we introduce notations and review the main notions and results useful in the paper; in Section~\ref{sec:axioms} we introduce an axiomatic approach to capital allocation rules in a dynamic setting and investigate existence of rules fulfilling some further axioms. In Section~\ref{sec:g-expectation}, instead, we focus on dynamic capital allocation rules induced by $g$-expectations and present some examples. Conclusions and final remarks are provided in Section~\ref{sec: conclus}.


\section{Notations and preliminaries}\label{sec:preliminaries}
In this section we recall some basic notions and definitions that will be used in the following sections.
We first concern with the notion of dynamic risk measures and the related properties and then move to basic facts about dynamic measures and their connection to BSDEs. \bigskip

\textit{Dynamic risk measures.}\smallskip

Let $T>0$ be a given future time horizon and let $(\Omega, \mathcal{F}, P)$ be a general probability space. Consider a continuous-time setting where time evolves between $0$ and $T$ and let $(\mathcal{F}_t)_{0 \leq t \leq T}$ be a filtration such that $\mathcal{F}_0=\{\emptyset, \Omega\}$ and $\mathcal{F}_T=\mathcal{F}$. We will focus on risk measures quantifying the riskiness of financial positions belonging to $L^{\infty}(\mathcal{F}_T)=L^{\infty}(\Omega, \mathcal{F}_T, P)$, that is, the space of essentially bounded random variables defined on $(\Omega, \mathcal{F}_T, P)$. All equalities and inequalities have to be understood in the $P$-a.s. sense.\smallskip

We recall that a static risk measure is a functional quantifying \textit{now} the riskiness of any position $X$ of
maturity $T$, while a dynamic risk measure is a functional quantifying the riskiness of $X$ at any time $t \in [0,T]$, taking into account the whole information available up to time $t$. More precisely, we recall the following definition by referring to~\cite{ArDeEbHe99,bion-nadal-FS,CDK1,delb,delbaen-pisa,delb-mstable,detlef-scandolo,follmers-book,frittelli-rg}, among many others, for a more detailed treatment on static and dynamic convex risk measures.

\begin{definition}
A static risk measure is a functional $\rho: L^{\infty} (\mathcal{F}_{T}) \to \Bbb R$ satisfying some further assumptions (e.g., monotonicity, cash-invariance, convexity).

A dynamic risk measure $(\rho_{t}) _{t \in [0,T]}$ is a family of functionals
%
%
%
%
%
\begin{equation*}
\rho_{t}: L^{\infty} (\mathcal{F}_{T}) \to L^{\infty}( \mathcal{F}_{t}), \quad t \in [0,T],
\end{equation*}
such that $\rho_0$ is a static risk measure and $\rho_T(X)=-X$ for any $X\in L^\infty(\cF_T)$.
\end{definition}
\normalcolor
\medskip

An incomplete list of desirable properties that are sometimes imposed to dynamic risk measures $(\rho_t)_{t\in [0,T]}$ are the following: \medskip

\noindent - monotonicity: if $X,Y \in L^{\infty}(\mathcal{F}_{T})$ and $X \leq Y$, then $\rho_{t}(X) \geq \rho_{t}(Y)$ for any $t \in [0,T]$.
\smallskip

\noindent - cash-additivity:
$\rho_{t}(X+m_t)=\rho_{t}(X)-m_t$ for any
$X \in L^{\infty} (\mathcal{F}_{T})$, $m_t \in L^{\infty}(\mathcal{F}_t)$ and $t \in [0,T]$.

\noindent - convexity: $\rho_{t} (\alpha X +(1-\alpha)Y) \leq \alpha \rho_{t}(X)+ (1-\alpha)
\rho_{t}(Y)$ for any $X,Y \in L^{\infty}
(\mathcal{F}_{T})$, $t \in [0,T]$ and $\alpha \in [0,1]$.
\smallskip


\noindent - normalization: $\rho_t(0) =0$,  for any $t\in [0,T]$.\smallskip

\noindent - time-consistency:
$\rho_s(X)= \rho_s(-\rho_t(X))$, for any $X\in L^\infty(\cF_T)$, $0\leq s \leq t \leq T$.
\smallskip

\noindent - weak time-consistency:
$\rho_s(X)\leq  \rho_s(-\rho_t(X))$, for any $X\in L^\infty(\cF_T)$, $0\leq s \leq t \leq T$.
\normalcolor
\medskip

While monotonicity, cash-additivity, convexity and normalization are essentially a translation to the dynamic setting of the corresponding static axioms, time-consistency is peculiar to the dynamic setting and is a recursivity property. Weak time-consistency, instead, is a weaker version of recursivity.

In the following, with dynamic convex risk measures we mean any dynamic risk measure satisfying monotonicity, cash-additivity, convexity, and normalization.
We recall the following results on the dual representations of risk measures (see~\cite{bion-nadal-SPA} and~\cite{detlef-scandolo}).

Let $(\rho_{t}) _{0 \leq t \leq T}$
be a dynamic convex risk measure satisfying continuity from below, that is, if
$(X_n)_{n \geq 0} \subseteq L^{\infty}(\mathcal{F}_T)$ is an increasing sequence with $X_n \nearrow X$, then
$\rho_t(X_n) \to_{n \to + \infty} \rho_t(X)$, $P$-a.s.
Then it can be represented as
\begin{equation} \label{eq: repr dyn convex addit}
\rho_{t}(X)=
\essmax_{Q \in \mathcal{Q}_{t}} \{ E_Q [- X \vert \mathcal{F}_ {t} ]
-c_{t}(Q) \},
\end{equation}
where $c_{t}$ is the so called
(minimal) penalty term, defined as
\begin{equation} \label{eq: c penalty}
c_t(Q)\triangleq \esssup_{Y \in L^\infty(\mathcal{F}_T)} \{E_{Q} [-Y \vert \mathcal{F}_t] -\rho_t(Y) \}, \quad Q\in \mathcal{P}_t,
\end{equation}
with $\mathcal{P}_{t}$ being the set of probability measures defined on $(\Omega,\mathcal{F}_T)$, absolutely continuous with respect to $P$ and such that $Q \vert _{\mathcal{F}_t}=P$, and $\mathcal{Q}_t \triangleq \{Q \in \mathcal{P}_t: c_t(Q) \in L^{\infty}(\mathcal{F}_t)\}$.
\normalcolor
Coherent risk measures correspond to penalty terms belonging to $\{0; + \infty\}$ (see~\cite{delb-mstable}).
\bigskip

\textit{BSDEs and their connection to dynamic risk measures.}\smallskip

Let us now recall the connection between dynamic risk measures and BSDEs. Let $(B_t)_{t \geq 0}$ be a standard $d$-dimensional Brownian motion defined on the probability space
$(\Omega,\mathcal{F},P)$. Denote by
$(\mathcal{F}_t)_{t\geq 0}$ the natural Brownian filtration augmented by the $P$-null sets.
In the sequel, we identify a probability measure $Q< \!\!\!< P$ with its Radon-Nikod\'ym density
$\frac{{\rm d }Q}{{\rm d} P}$. Since we are working in a Brownian setting, we also identify a probability measure $Q$ equivalent to $P$ with the predictable process $(q_t)_{t\in [0,T]}$ induced by the stochastic exponential, i.e., such that
\begin{align*}
E\left[\left. \frac{d Q}{d P}\right|\cF_t\right] = \cE(q . B)_t
= \exp\left( -\frac{1}{2} \int_0^t \|q_s\|^2 ds + \int_0^t q_s d B_s\right).
\end{align*}
 See Revuz and Yor~\cite{revuz-yor} for details. \normalcolor
Moreover, given a probability measure $Q$ on  $(\Omega,\mathcal{F})$, we denote by $E_Q$ (respectively,
$E_Q[\ \cdot \ | \cF_t]$) the expected value (respectively, the conditional expectation) with respect to
$Q$. When the expectation is considered with respect to the reference probability measure $P$, we will simply use the notation $E$.

Pardoux and Peng~\cite{pardoux-peng90} introduced equations of the following type (known as BSDEs):
\begin{align}\label{eq:BSDE}
Y_t=X+ \int_t^T g(s,Y_s,Z_s) ds -\int_t^T Z_s d B_t, \quad 0\leq t\leq T
\end{align}
where $g:\Omega \times [0,T]\times \bR \times \bR^{d} \to \bR$ is generally called generator or driver, $T$ is the time horizon and $X \in L^2(\cF_T)$
is a terminal condition, where $L^2(\mathcal{F}_T)$ is the space of all square integrable random variables on $(\Omega, \mathcal{F}_T, P)$.
To simplify the notation we often write $g(t,y,z)$ instead of
$g(\omega,t,y,z)$.

We know from~\cite[Theorem 4.1]{pardoux-peng90} and~\cite[Theorem 35.1]{peng-97}  or~\cite{EPQ} that if $g$ satisfies the {\it usual assumptions}:
\smallskip

%

\noindent   ($g$1) \  $g$ Lipschitz continuous in $y$ and $z$, i.e., there exists a constant $\mu >0$ such that $d\bP \times d t$, for any
$(y_0,z_0),(y_1,z_1)\in \bR\times \bR^d$,
$$
  |g(t,y_0,z_0) -g(t,y_1,z_1)| \leq \mu \left( |y_0-y_1|+ \| z_0-z_1 \| \right);
$$

\smallskip

\noindent  ($g$2) \  for any $(y,z)\in \bR\times \bR^d$, $g(\cdot,y,z)$ is a predictable process such that
\begin{align}\label{g-condition}
   E\left[ \int_0^T |g(s,y,z)|^2 \, ds\right]<+\infty;
\end{align}
\smallskip

\noindent  ($g$3) \  $g(t,y,0)=0$ for any $t\in [0,T]$, 

\medskip

then, for every
$X \in L^2(\cF_T)$, the BSDE in~\eqref{eq:BSDE} admits a unique solution $(Y_t^X,Z_t^X)$ consisting of predictable processes with values in $\bR \times \bR^d$ such that
$$
 E\left[ \int_0^T Y_t^2 dt \right] <+\infty \quad \text{and} \quad
 E\left[ \int_0^T \| Z_t\|^2 dt \right] <+\infty.
$$

Note that the existence and uniqueness of the solution is guaranteed also when ($g$3) is replaced by \smallskip

\noindent  ($g$3') \   $E\left[ \int_0^T |g(s,y,0)|^2 ds\right]<+\infty$ for any $y\in \bR$.
\smallskip

\noindent In such a case, the driver will be said to satisfy the {\it non-normalized usual assumptions}.\medskip

Using Peng~\cite{peng-97}'s terminology, the first component
of the solution to the BSDE~\eqref{eq:BSDE}
$$
\cE^g(X|\cF_t) \triangleq  Y_t^X
$$
is called conditional $g$-expectation at time $t$. When
$g(t,z)=\mu \|z\|$ (with $\mu >0$), $\cE_g$ will be denoted by $\cE_\mu$ (see, e.g.,~\cite{peng-97}).
Some relevant results about the link between $g$-conditional expectations and dynamic risk measures can be found in~\cite[Section 4]{RG} and~\cite{barrieu-elkaroui}.

As shown in~\cite{RG}, for a convex driver satisfying the usual assumptions ($g$1)-($g$3) and independence from $y$,
$$
\rho_t^g(X) \triangleq \cE^g(-X|\cF_t), \qquad \ X\in L^2(\cF_T),t\in [0,T],
$$
is a dynamic convex and time-consistent risk measure satisfying normalization.

\section{Axiomatic approach to capital allocation: from a static to a dynamic setting}\label{sec:axioms}

In this section, we introduce and investigate an axiomatic approach to dynamic capital allocation in the spirit of~\cite{kalkbrener}.\medskip

We start by recalling some theoretical aspects of capital allocation in a static framework (see~\cite{delbaen-pisa,denault,centrone-rg,kalkbrener,tasche,tsanakas}, among others, for a detailed treatment). Assume that a time horizon $T$ and a static risk measure $\rho$ have been fixed and let $X \in L^{\infty}(\cF_T)$ be a financial position (with maturity $T$) which is formed by the sub-units (or business lines or sub-portfolios) $X_1,\dots,X_n \in L^{\infty}(\cF_T)$, i.e. $X = \sum_{i=1}^n X_i$. For instance, we can think at $X$ as the profit and loss of a portfolio composed by different assets and at $X_1, ...,X_n$ as at the profits and losses of the different assets, or at $X$ as the riskiness of a firm and at $X_1, ...,X_n$ as at the different branches or business lines of the firm.

The {\it capital allocation problem} consists then in finding a ``suitable way'' to share the margin $\rho(X)$ among the different sub-units $X_1,\dots,X_n$. More concretely, it consists in finding $k_1,\dots,k_n \in \bR$ such that
$\rho(X)=k_1+\dots+k_n$ where each $k_i$ denotes the capital to be allocated to $X_i$ or, in other words, the risk contribution of $X_i$ to the total risk capital $\rho(X)$ of $X$.

More in general, given a convex risk measure $\rho$, Kalkbrener~\cite{kalkbrener} defined {\it capital allocation rule (CAR)} with respect to $\rho$ any functional $\Lambda: L^{\infty}(\cF_T) \times L^{\infty}(\cF_T) \to \bR$ satisfying $\Lambda(X;X)=\rho(X)$. In the terminology above, then, $\Lambda(X_i;X)$ represents the capital to be allocated to the sub-unit $X_i$ of the whole position $X$. However, $\Lambda$ is defined for any pair $(X,Y) \in L^{\infty}(\cF_T) \times L^{\infty}(\cF_T)$ where $X$ can be seen as a sub-portfolio\footnote{To be more precise, given a position $Y$, $X$ is called sub-portfolio of $Y$ if $Y=X+Z$ for some $Z$. In particular, in $L^{\infty}(\cF_T)$ any pair $(X,Y)$ can be seen as sub-portfolio and portfolio, respectively. See~\cite{centrone-rg}.} of the whole portfolio $Y$, and $\Lambda(X;Y)$ can be interpreted as the capital allocated to the sub-portfolio $X$ to cover the riskiness of the global portfolio $Y$. Furthermore, the condition $\Lambda(X;X)=\rho(X)$ imposed to a CAR guarantees that for a stand-alone portfolio the capital to be allocated corresponds exactly to the margin (or capital requirement) $\rho(X)$.
Note that
a general CAR may fail to satisfy the requirement $\rho(X)=\sum_{i=1}^n \Lambda(X_i;X)$.
If such a condition is fulfilled,
\normalcolor
then the CAR is said to satisfy full allocation.

Before considering the dynamic version of capital allocation rules, we recall two well-known and quite popular (static) CARs used in the literature: the gradient and the Aumann-Shapley CARs (see~\cite{delbaen-pisa,denault,centrone-rg,kalkbrener}). If $\rho$ is Gateaux differentiable at $Y$, then:

\begin{itemize}
\item the gradient CAR is defined as the directional derivative of $\rho$ at $Y$ in the direction of $X$, i.e.
$$\Lambda^{grad} (X;Y) \triangleq \lim_{h \to 0} \frac{\rho(Y + hX)- \rho(Y)}{h},$$
and can be interpreted as the marginal contribution of sub-unit $X$ to the overall risk $Y$;

\item the Aumann-Shapley CAR is defined as
$$\Lambda^{AS} (X;Y) \triangleq \int_{0}^1 \Lambda^{grad} (X;\gamma Y) d \gamma$$
and somehow corresponds to the Aumann-Shapley value of game theory. We recall, indeed, from Denault~\cite{denault} that the Aumann-Shapley value for player/portfolio $i$ is defined as
$$
\Lambda_i^{AS} \triangleq \int_{0}^1 \frac{\partial r}{\partial \delta_i} (\gamma \Delta) \,d \gamma,
$$
where $\delta \in \Bbb R^n_+$ represents the level of participation of the $n$ players in a coalition, the components of $\Delta \in \Bbb R^n_+$ the full participation of the players, and $r: \Bbb R^n \to \Bbb R$ denotes a cost function. In the context of fractional players, the Aumann-Shapley value can be then interpreted as the average of the marginal costs of player/portfolio $i$, where the average is taken with respect to $\gamma$, acting on the size of portfolio (from $0$ to $\Delta$).
\normalcolor
\end{itemize}

For non-Gateaux differentiable risk measures, subdifferential versions of the previous (static) CARs are defined and studied in~\cite{centrone-rg}.
\bigskip

Although there is a wide literature on dynamic risk measures and on capital allocation rules in a static setting, a systematic analysis of dynamic capital allocations has not yet been extensively developed and only a few papers (see~\cite{boonen,cherny,kromer-overbeck-bsde,kromer-overbeck-volterra,mabitsela-etal,tsanakas-dynamic}) on capital allocation work in a dynamic setting. Furthermore, in these papers very specific CARs are considered and the authors mainly investigate and cover the gradient approach. For this reason, we start providing an axiomatic approach to CARs in the dynamic setting in full generality. In the next section, inspired by \cite{kromer-overbeck-bsde,kromer-overbeck-volterra,mabitsela-etal}, we then introduce an approach suitable for risk measures induced by BSDEs.\medskip

The general notion of a static capital allocation rule can be immediately generalized to a dynamic framework.
\begin{definition}
Given a dynamic risk measure $(\rho_t)_{t \in [0,T]}$, we define dynamic capital allocation rule (CAR) associated to $(\rho_t)_{t \in [0,T]}$ a family $(\Lambda_t)_{t \in [0,T]}$ of functionals
$$\Lambda_t: L^{\infty}(\mathcal{F}_T) \times L^{\infty}(\mathcal{F}_T) \to L^{\infty}(\mathcal{F}_t)$$
satisfying $\Lambda_t(X;X)=\rho_t(X)$ for any $X \in L^{\infty}(\mathcal{F}_T)$ and $t \in [0,T]$.

An audacious dynamic CAR, instead, will only satisfy $\Lambda_t(X;X)\leq \rho_t(X)$ for any $X \in L^{\infty}(\mathcal{F}_T)$ and $t \in [0,T]$ (see~\cite{centrone-rg} for the static version).
\end{definition}

Similarly to the static case, $\Lambda_t(X;Y)$ can be interpreted as the (random) amount to be allocated to $X$ as a sub-portfolio of $Y$ at time $t$. Differently from a static CAR where $\Lambda(X;Y)$ is deterministic, for a dynamic CAR $\Lambda_t(X;Y)$ is a $\cF_t$-measurable random variable, hence taking into account all the information available up to time $t$.\smallskip

Here below we provide a list of desirable axioms which extend to the dynamic setting those defined for static CARs in~\cite{kalkbrener,denault,centrone-rg,tsanakas}. In a static setting, no-undercut and full allocation were already studied in~\cite{kalkbrener} while the others have been introduced and discussed later on.\medskip
\normalcolor

\noindent - monotonicity: if $X \leq Z$ then $\Lambda_t(X;Y) \geq \Lambda_t(Z;Y)$ for any $Y \in L^{\infty}(\mathcal{F}_T)$ and $t \in [0,T]$.
\smallskip

\noindent - no-undercut: $\Lambda_t(X;Y) \leq \rho_t(X)$ for any $X,Y \in L^{\infty}(\mathcal{F}_T)$ and $t \in [0,T]$.
\smallskip

\noindent - $\mathcal{F}_t$-riskless: $\Lambda_t(c_t;Y) =-c_t $ for any $c_t \in L^{\infty}(\mathcal{F}_t), Y \in L^{\infty}(\mathcal{F}_T)$ and $t \in [0,T]$.
\smallskip

\noindent - $\mathcal{F}_t$-1-cash-additivity: $\Lambda_t(X+c_t;Y) =\Lambda_t(X;Y)-c_t $ for any $c_t \in L^{\infty}(\mathcal{F}_t), X,Y \in L^{\infty}(\mathcal{F}_T)$ and $t \in [0,T]$.
\smallskip

\noindent - $\mathcal{F}_t$-cash-additivity: $\Lambda_t(X+c_t;Y+c_t) =\Lambda_t(X;Y)+c_t $ for any $c_t \in L^{\infty}(\mathcal{F}_t), X,Y \in L^{\infty}(\mathcal{F}_T)$ and $t \in [0,T]$.
 \smallskip

\noindent - full allocation: $\Lambda_t(\sum_{i=1}^n Y_i;Y) = \sum_{i=1}^n \Lambda_t(Y_i;Y)$ for any $Y_1,...,Y_n, Y \in L^{\infty}(\mathcal{F}_T)$ such that $Y=\sum_{i=1}^n Y_i$, and $t \in [0,T]$.
\smallskip

\noindent - sub-allocation: $\Lambda_t(\sum_{i=1}^n Y_i;Y) \geq  \sum_{i=1}^n \Lambda_t(Y_i;Y)$ for any $Y_1,...,Y_n, Y \in L^{\infty}(\mathcal{F}_T)$ such that $Y=\sum_{i=1}^n Y_i$, and $t \in [0,T]$.
\smallskip

\noindent - weak convexity: $\Lambda_t(\sum_{i=1}^n \alpha_i Y_i;Y) \leq  \sum_{i=1}^n \alpha_i \Lambda_t(Y_i;Y)$ for any $\alpha_i \in [0,1]$ ($i=1,...,n$), $Y_1,...,Y_n, Y \in L^{\infty}(\mathcal{F}_T)$ satisfying $Y=\sum_{i=1}^n \alpha_i Y_i$ and $\sum_{i=1}^n \alpha_i=1$, and $t \in [0,T]$.
\medskip

Monotonicity means that the capital allocated to a position $Z$ has to be smaller than the capital allocated to a riskier position $X$. No-undercut translates the idea that, at any time $t$, the capital allocated to any $X$ considered as a sub-portfolio of $Y$ does not exceed the capital allocated to $X$ considered as a stand-alone portfolio. In the terminology of Tsanakas~\cite{tsanakas}, such property guarantees that there is no incentive to split $X$ from $Y$ because the capital requirement due to $X$ as a stand-alone portfolio would be higher than the capital to be allocated to $X$ as a sub-portfolio of $T$.
$\mathcal{F}_t$-riskless means that the capital allocated to a, roughly speaking, ``known'' position at time $t$ (or, better, to a $\cF_t$-measurable r.v.) is exactly the opposite of such position.
$\mathcal{F}_t$-1-cash-additivity and $\mathcal{F}_t$-cash-additivity have a similar interpretation to cash-additivity for dynamic risk measures; we stress that for $\mathcal{F}_t$-1-cash-additivity the translation has an impact only on the first variable, i.e., on the sub-portfolio. 
Full allocation states that, at any time $t$, the capital requirement $\Lambda_t(Y;Y)$ is fully divided into the different sub-portfolios. As emphasized by Kalkbrener~\cite{kalkbrener} in the static case, however, full allocation and no-undercut together with $\Lambda(X;X)=\rho(X)$ are incompatible for convex risk measures that are not coherent since these axioms together imply subadditivity. Nevertheless, as underlined by Brunnermeier and Cheridito~\cite{brunnemeier-cheridito}, full allocation can be dropped when, e.g., the CAR is considered only for monitoring purposes (see also~\cite{canna-etal,centrone-rg} and the references therein for a deeper discussion). For the reason above, sub-allocation and weak convexity can be defined and investigated as alternatives to full allocation.
In particular, sub-allocation implies that the excess $\Lambda_t(\sum_{i=1}^n Y_i;Y) -  \sum_{i=1}^n \Lambda_t(Y_i;Y) \geq 0$ can be seen as an undivided deposit/cost that represents an extra-security margin and can be motivated because of some costs shared by all the sub-portfolios. Weak convexity, instead, represents a sort of convexity in the first variable.
\medskip

While the previous axioms are simply a reformulation in a dynamic setting of those required in the static case, the following are specific for dynamic CARs and, up to our knowledge, have not been introduced yet in the literature of CARs.
In particular, time-consistency is specific for the dynamic setting and represents a recursivity requirement, similarly to the one imposed to dynamic risk measures.
\normalcolor

\noindent - time-consistency of type 1: $\Lambda_s(-\Lambda_t(X;Y);Y) =\Lambda_s(X;Y)$ for any $X,Y \in L^{\infty}(\mathcal{F}_T)$ and $0 \leq s\leq t \leq T$.\smallskip

\noindent - time-consistency of type 2: $\Lambda_s(-\Lambda_t(X;Y);-\rho_t(Y)) =\Lambda_s(X;Y)$ for any $X,Y \in L^{\infty}(\mathcal{F}_T)$ and $0 \leq s\leq t \leq T$.\smallskip
\medskip

More specifically, time-consistency of type 2 means that the capital to be allocated at time $s$ for $X$ as a sub-portfolio of $Y$ should be the same as the capital to be allocated at time $s$ for the sub-position $-\Lambda_t(X;Y)$ of the whole position $-\rho_t(Y)$, arising from a intermediate step from $T$ to $t$; that is, by considering two consecutive steps from $T$ to $t$ and then from $t$ to $s$, where in the last step we don't consider the final position $Y$ but its margin at time $t$. Time-consistency of type 1, instead, fixes the whole portfolio whatever is the evaluation time and represents a recursivity property guaranteeing that the capital to be allocated at $s$ to $X$ as a sub-portfolio of $Y$ is the same as the capital to be allocated when we proceed backward in time in two steps and when the whole position $Y$ is fixed.
\normalcolor
\bigskip


We now investigate
the relation between dynamic CARs and dynamic convex risk measures and, in particular,
\normalcolor
the existence of dynamic CARs satisfying some suitable axioms among those listed.\medskip

In the following, let $(\rho_t)_{t\in [0,T]}$ be a dynamic convex risk measure satisfying time-consistency and assume that, for any $t\in [0,T]$,
\begin{assumption}\label{ass:max-realized}
\begin{equation} \label{eq: rhot essmax}
\rho_t(X)= \essmax_{Q \in \mathcal{Q}_t} \{E_Q [-X \vert \mathcal{F}_t] - c_t(Q) \} \quad \mbox{for any } X \in L^{\infty}(\mathcal{F}_T),
\end{equation}
for the minimal penalty term $c_t$ defined in~\eqref{eq: c penalty} and
$\mathcal{Q}_t$ defined accordingly.
\end{assumption}
We recall that Assumption~\ref{ass:max-realized} is fulfilled, for instance, when $\rho_t$ is continuous from below (see Bion-Nadal~\cite{bion-nadal-FS}).

We recall from the static case that the gradient and the subdifferential CARs are related to the optimal scenarios in the dual representation of $\rho=\rho_0$ (see~\cite{delbaen-pisa,centrone-rg,kalkbrener}). Similarly to the static case (see~\cite{centrone-rg}), a dynamic counterpart of the subdifferential CAR can be defined as follows:
%
\begin{equation} \label{eq: lambdat subdiff}
\Lambda^{sub}_t(X;Y) \triangleq  E_{Q_t^Y} [-X \vert \mathcal{F}_t] - c_t(Q_t^Y),
\end{equation}
where
\begin{equation} \label{eq: Qt optimal}
Q_t^Y \in \arg \essmax_{Q \in \mathcal{Q}_t} \{E_Q [-Y \vert \mathcal{F}_t] - c_t(Q) \}.
\end{equation}

Note that, similarly to the static case, the CAR above is not uniquely assigned but represents a family of CARs depending on the choice of $Q_t^Y$ with respect to it is defined. A discussion on this point will follow later.
\medskip

Similarly as in~\cite{delbaen-pisa}, where the static version is considered, we define the subdifferential of $\rho_t$ at $X \in L^{\infty}(\cF_T)$ as
\begin{align*}
   \partial \rho_t(X) \triangleq  \left\{ Q\in \mathcal{Q}_{t}: \ \
    \rho_t(Y) \geq \rho_t(X) + E_Q[-(Y-X)|\mathcal{F}_t]
    \ \text{ for all } Y \in L^\infty(\mathcal{F}_T)  \right\}.
\end{align*}
It is said (see, e.g.,~\cite{delbaen-pisa,zalinescu}) that $\rho_t$ is subdifferentiable at $X \in L^{\infty}(\cF_T)$ if $\partial \rho_t(X)$ is non-empty.

The following lemma extends to the dynamic case the relationship between subdifferential and optimal scenarios of risk measures, well known in the static case (see, among others, Delbaen~\cite{delbaen-pisa}, Ruszczy\'{n}ski and Shapiro~\cite{rusz-shapiro}, and Z\u{a}linescu~\cite{zalinescu}).

\begin{lemma} \label{rem: subdiff}
For a dynamic convex risk measure $\rho_t$ satisfying Assumption~\ref{ass:max-realized}, $\partial \rho_t(X)$ coincides with the set formed by
all the optimal scenarios in \eqref{eq: rhot essmax} (hence $\partial \rho_t(X) \neq \emptyset$ for any $X \in L^\infty(\mathcal{F}_T)$). Moreover, the optimal scenario $Q_t^X$ is uniquely determined whenever $\rho_t(X)$ is Gateaux differentiable at $X$.
\end{lemma}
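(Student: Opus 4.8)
The plan is to mirror the classical static argument (see~\cite{delbaen-pisa,rusz-shapiro,zalinescu}), carrying it out with conditional expectations and essential suprema in place of their unconditional counterparts, and to prove the two inclusions separately. For ``optimal scenario $\Rightarrow$ subgradient'', I would take $Q$ attaining the essmax in~\eqref{eq: rhot essmax} at $X$, so that $Q \in \mathcal{Q}_t$ and $\rho_t(X) = E_Q[-X|\mathcal{F}_t] - c_t(Q)$ $P$-a.s.; then for an arbitrary $Y \in L^\infty(\mathcal{F}_T)$, the representation~\eqref{eq: rhot essmax} gives $\rho_t(Y) \geq E_Q[-Y|\mathcal{F}_t] - c_t(Q)$, and writing $-Y = -X - (Y-X)$ and using linearity of the conditional expectation together with the identity just displayed at $X$ yields $\rho_t(Y) \geq \rho_t(X) + E_Q[-(Y-X)|\mathcal{F}_t]$, i.e. $Q \in \partial\rho_t(X)$.

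For the converse, I would start from $Q \in \partial\rho_t(X)$, rearrange the defining inequality to $E_Q[-Y|\mathcal{F}_t] - \rho_t(Y) \leq E_Q[-X|\mathcal{F}_t] - \rho_t(X)$ $P$-a.s. for every $Y$, and then take the essential supremum over $Y$: since the right-hand side does not depend on $Y$, the definition~\eqref{eq: c penalty} of $c_t$ gives $c_t(Q) \leq E_Q[-X|\mathcal{F}_t] - \rho_t(X)$ a.s., that is, $\rho_t(X) \leq E_Q[-X|\mathcal{F}_t] - c_t(Q)$. Combined with the reverse inequality, which is immediate from~\eqref{eq: rhot essmax}, this forces equality, so $Q$ is an optimal scenario for $X$. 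Non-emptiness of $\partial\rho_t(X)$ then follows at once from Assumption~\ref{ass:max-realized}, which guarantees that the essmax is attained.

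For the uniqueness statement I would argue as follows: given any $Q \in \partial\rho_t(X)$ and any direction $W \in L^\infty(\mathcal{F}_T)$, applying the subgradient inequality to $Y = X + hW$ and dividing by $h$ of either sign, while exploiting that $h \mapsto h^{-1}(\rho_t(X+hW)-\rho_t(X))$ is $P$-a.s. non-decreasing by convexity, the one-sided conditional difference quotients converge a.s. and sandwich $E_Q[-W|\mathcal{F}_t]$; Gateaux differentiability of $\rho_t$ at $X$ makes the two limits coincide, so $E_Q[-W|\mathcal{F}_t]$ equals the Gateaux derivative of $\rho_t$ at $X$ in direction $W$ and hence is independent of the choice of $Q \in \partial\rho_t(X)$. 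Taking $W = \mathbf{1}_A$ for $A \in \mathcal{F}_T$ and integrating over $\Omega$ shows $Q(A)$ is the same for all $Q \in \partial\rho_t(X)$, whence the optimal scenario is unique.

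The main difficulty is not conceptual but lies in the careful bookkeeping of the almost-sure and essential-supremum statements in the conditional setting: in particular, justifying that the essential supremum in~\eqref{eq: c penalty} inherits the pointwise bound uniform in $Y$, and that the monotone difference quotients converge $P$-a.s. so the limit may be passed inside the a.s. inequalities. One should also keep track throughout of the constraint $Q \in \mathcal{Q}_t$ (equivalently $c_t(Q) \in L^\infty(\mathcal{F}_t)$), which is part of the definition of $\partial\rho_t(X)$ on one side and of the form of the representation~\eqref{eq: rhot essmax} on the other.
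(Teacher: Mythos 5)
Your proposal is correct and follows essentially the same route as the paper: the first inclusion via the dual representation evaluated at the optimal scenario, the converse by taking the essential supremum in the rearranged subgradient inequality to identify $c_t(Q)$ and force optimality, and uniqueness by sandwiching the one-sided difference quotients between $E_Q[-W|\mathcal{F}_t]$ from both sides and invoking Gateaux differentiability. The only (harmless) difference is that you make the final identification of the measures explicit by testing with indicators $\mathbf{1}_A$ and integrating, where the paper simply concludes that the two optimal scenarios coincide on $\mathcal{F}_T$ from the equality of conditional expectations for all $Y$.
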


\begin{proof}
Let us observe that~\eqref{eq: rhot essmax} guarantees that there exists at least one element in $\partial \rho_t(X)$.
In fact,
in this case, for every $Q_t^X \in {\rm arg \,} \essmax_{Q\in \mathcal{Q}_t} \left\{ E_{{Q}_t}[-X| \mathcal{F}_t] -c_t(Q)\right\}$
\begin{align*}
   \rho_t(Y) - \rho_t(X) &=
   \essmax_{Q \in \mathcal{Q}_t}
   \left\{ E_{{Q}_t}[-Y | \mathcal{F}_t] -c_t(Q)\right\}  -
  \left[ E_{Q_t^X}[-X | \mathcal{F}_t] -c_t(Q_t^X)\right]\\
   & \geq
    E_{Q_t^X}[-(Y-X) | \mathcal{F}_t]
\end{align*}
holds for any $Y \in L^{\infty}(\cF_T)$. Thus $Q_t^X$
belongs to $\partial \rho_t(X)$.

Vice versa,
$Q_t^X \in \partial \rho_t(X)$ implies that it is an optimal scenario in the dual representation of $\rho_t(X)$. Indeed, $Q_t^X \in \partial \rho_t(X)$ implies that $E_{Q_t^X} [-Y \vert \mathcal{F}_t] -\rho_t(Y) \leq E_{Q_t^X} [-X \vert \mathcal{F}_t] -\rho_t(X)$ for any $Y \in L^\infty(\mathcal{F}_T)$, hence
\begin{equation*}
c_t(Q_t^X)= {\essmax}_{Y \in L^\infty(\mathcal{F}_T)} \{E_{Q_t^X} [-Y \vert \mathcal{F}_t] -\rho_t(Y) \}=E_{Q_t^X} [-X \vert \mathcal{F}_t] -\rho_t(X).
\end{equation*}
It then follows that
\begin{equation*}
\rho_t(X)=E_{Q_t^X} [-X \vert \mathcal{F}_t] -c_t(Q_t^X),
\end{equation*}
therefore $Q_t^X \in \partial \rho_t(X)$ is an optimal scenario in the dual representation of $\rho_t(X)$.
\smallskip

As in the static case, we now show that $\partial \rho_t(X)$ is a singleton when $\rho_t$ is Gateaux differentiable. To this aim, we say (see, e.g.,~\cite{rusz-shapiro,zalinescu} for the static case) that $\rho_t$ is Gateaux differentiable at $X$ if
$\rho_t$ is directionally differentiable at $X$ in the direction $Y$, i.e., if there exists
the limit in
\begin{equation}\label{eq:limit}
  D\rho_t(X;Y) \triangleq  \lim_{h \to 0} \frac{\rho_t(X+hY)-\rho_t(X)}{h},
\end{equation}
and, for any $X\in {\rm dom}(\rho_t)$, the directional derivative. 
\begin{equation*}
D\rho_t(X;Y)= \lim_{h \searrow 0^+} \frac{\rho_t(X+hY)-\rho_t(X)}{h}
\geq
\lim_{h \searrow 0^+} \frac{E_{Q_t^X} [-hY|\mathcal{F}_t] }{h} =  E_{Q_t^X} [-Y|\mathcal{F}_t],
\end{equation*}
while
\begin{equation*}
D\rho_t(X;Y)= \lim_{h \nearrow 0^-} \frac{\rho_t(X+hY)-\rho_t(X)}{h}
\leq
\lim_{h \nearrow 0^-} \frac{E_{Q_t^X} [-hY|\mathcal{F}_t] }{h} = E_{Q_t^X} [-Y|\mathcal{F}_t].
\end{equation*}
From the uniqueness of the limit in~\eqref{eq:limit} we can conclude that
$$
  D\rho_t(X;Y)= E_{Q_t^X} [-Y|\mathcal{F}_t].
$$
Consequently, for any pair $Q^1_X,Q^2_X \in {\rm arg\,} \essmax_{\mathcal{Q}_t} \left\{ E_{Q}[-X| \mathcal{F}_t] -c_t(Q)\right\}$
the identity
$$
  E_{Q^1_X} [-Y|\mathcal{F}_t] =E_{Q_X^2} [-Y|\mathcal{F}_t]
$$
holds for every $Y \in L^\infty(\mathcal{F}_T)$.
Thus
$Q^1_X$ and $Q^2_X$ coincide
 on $\mathcal{F}_T$ and $D\rho_t(X;Y)$ can be represented by (one of) them.
Hence, $ \partial \rho_t(X)$ is a singleton if $\rho_t$ is Gateaux differentiable at $X$.
\end{proof}
\medskip

Note that, from the arguments above, the dynamic subdifferential CAR $\Lambda^{sub}_t$ can be also written as
\begin{equation} \label{eq: lambdat subdiff-3}
\Lambda^{sub}_t(X;Y)= \rho_t(Y) -E_{Q_t^Y} [-(Y-X) \vert \mathcal{F}_t].
\end{equation}

The previous lemma implies that, for a dynamic convex risk measure satisfying Assumption~\ref{ass:max-realized}, the subdifferential CAR represents a family of CARs since, in general, there may be several optimal scenario $Q_t^X$. The optimal scenario $Q_t^X$ is instead uniquely determined whenever $\rho_t(X)$ is Gateaux differentiable at $X$.

The non-differentiability of a risk measure (leading to a $ \partial \rho_t(X)$ that is not a singleton in general, hence to a family of -static or dynamic- CARs) could seem to be problematic since different CARs may rank differently
sub-portfolios. However, we wish to emphasize that this may happen in general since to any (static or dynamic) risk measure one can associate different capital allocation rules according to different axioms (e.g., gradient or subdifferential, marginal, quantile-based, Aumann-Shapley, ...). See~\cite{dhaene et al,tasche,tsanakas}. In order to define uniquely a CAR (e.g., out of a family of CARs), one possibility could be to choose an optimal capital allocation rule where optimality is taken with respect to a suitable deviation measure (as discussed by Dhaene et al.~\cite{dhaene et al}). Another possibility could be to fix the family of CARs according to some desirable axioms and, in case of non-differentiability of the risk measure and of subdifferential CARs, to choose the optimal scenario on which the CAR is built according to some criteria, e.g. of minimal martingale or entropy (see F\"{o}llmer and Schweizer~\cite{follmer-schweizer}, Frittelli~\cite{frittelli-entropy}, Centrone and Rosazza Gianin~\cite{centrone-rg} for a discussion on related CARs).\medskip
\normalcolor

The following result establishes a one-to-one correspondence between $(\Lambda_t)_{t\in [0,T]}$ and $(\rho_t)_{t\in [0,T]}$ and the relationship between the properties of $(\Lambda_t)_{t\in [0,T]}$ and of $(\rho_t)_{t\in [0,T]}$, in line with Theorem 4.3 of Kalkbrener~\cite{kalkbrener} and of Proposition 4 of Centrone and Rosazza Gianin~\cite{centrone-rg}.

\begin{proposition}\label{prop: from lambda to rho and viceversa}
\noindent a) If $\Lambda_t: L^{\infty}(\mathcal{F}_T) \times L^{\infty}(\mathcal{F}_T) \to L^{\infty}(\mathcal{F}_t)$, for any $t \in [0,T]$, is a monotone and weakly convex functional satisfying $\Lambda_t(X;Y) \leq \Lambda_t(X;X)$ for any $X,Y \in L^{\infty}(\mathcal{F}_T)$ (no-undercut) and time-consistency of type 1 (resp. of type 2), then the associated dynamic risk measure $\rho_t$, defined as $\rho_t(X) \triangleq \Lambda_t(X;X)$ for $X \in L^{\infty}(\mathcal{F}_T)$, is a monotone, convex and weak time-consistent (resp. time-consistent) risk measure. Moreover, if $\Lambda_t$ satisfies also $\mathcal{F}_t$-cash-additivity, then the associated risk measure $\rho_t$ is also cash-additive (hence, a dynamic convex and time-consistent risk measure).\smallskip

\noindent b) If $\rho_t$ is a dynamic convex risk measure satisfying continuity from below,
 then there exists at least a dynamic CAR $\Lambda_t$ (e.g., $\Lambda^{sub}_t$) satisfying monotonicity, weakly convexity, $\mathcal{F}_t$-1-cash-additivity and no-undercut. \smallskip
\end{proposition}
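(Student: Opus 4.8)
The plan is to prove the two parts separately; each reduces to feeding suitable special arguments into the defining axioms, with no heavy machinery. For part~(a), set $\rho_t(X)\triangleq\Lambda_t(X;X)$. Monotonicity: if $X\leq Z$, monotonicity of $\Lambda_t$ in the first variable gives $\Lambda_t(X;Z)\geq\Lambda_t(Z;Z)=\rho_t(Z)$, while no-undercut gives $\Lambda_t(X;Z)\leq\Lambda_t(X;X)=\rho_t(X)$, so chaining yields $\rho_t(Z)\leq\rho_t(X)$. Convexity: fix $\alpha\in[0,1]$ and put $Y\triangleq\alpha X+(1-\alpha)Z$; weak convexity with $n=2$, $(Y_1,Y_2)=(X,Z)$, $(\alpha_1,\alpha_2)=(\alpha,1-\alpha)$ (admissible since $Y=\alpha_1Y_1+\alpha_2Y_2$, $\alpha_1+\alpha_2=1$) bounds $\rho_t(Y)=\Lambda_t(Y;Y)$ by $\alpha\Lambda_t(X;Y)+(1-\alpha)\Lambda_t(Z;Y)$, and no-undercut, $\Lambda_t(X;Y)\leq\rho_t(X)$ and $\Lambda_t(Z;Y)\leq\rho_t(Z)$, then closes the estimate.

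Still in part~(a): cash-additivity of $\rho_t$, when $\Lambda_t$ is moreover $\mathcal{F}_t$-cash-additive, is immediate from that axiom applied to the pair $(X,X)$. For the recursivity claims, I would set $Y=X$ in the relevant axiom. Type-1 time-consistency then becomes $\Lambda_s(-\rho_t(X);X)=\rho_s(X)$, so no-undercut yields $\rho_s(X)=\Lambda_s(-\rho_t(X);X)\leq\Lambda_s(-\rho_t(X);-\rho_t(X))=\rho_s(-\rho_t(X))$, i.e.\ weak time-consistency. Type-2 time-consistency with $Y=X$ reads $\Lambda_s(-\rho_t(X);-\rho_t(X))=\rho_s(X)$, that is $\rho_s(-\rho_t(X))=\rho_s(X)$, i.e.\ time-consistency; evaluating this at $s=t$ together with cash-additivity additionally forces $\rho_t(0)=0$, so one indeed lands in the class of dynamic convex, time-consistent risk measures.

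For part~(b), I would exhibit $\Lambda^{sub}_t$ from \eqref{eq: lambdat subdiff}. Continuity from below gives Assumption~\ref{ass:max-realized}, so by Lemma~\ref{rem: subdiff} the essential maximum in \eqref{eq: rhot essmax} is attained and for each $Y$ one may fix $Q_t^Y\in\mathcal{Q}_t$ realizing \eqref{eq: Qt optimal}; then $\Lambda^{sub}_t(X;X)=\rho_t(X)$, so $\Lambda^{sub}_t$ is a dynamic CAR. Monotonicity is clear since $Q_t^Y\geq0$. $\mathcal{F}_t$-1-cash-additivity follows from $E_{Q_t^Y}[c_t\,|\,\mathcal{F}_t]=c_t$ for $c_t\in L^\infty(\mathcal{F}_t)$ (because $Q_t^Y|_{\mathcal{F}_t}=P|_{\mathcal{F}_t}$). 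No-undercut holds because $Q_t^Y$, although optimal only for $Y$, is still a feasible scenario for $X$: $\Lambda^{sub}_t(X;Y)=E_{Q_t^Y}[-X\,|\,\mathcal{F}_t]-c_t(Q_t^Y)\leq\essmax_{Q\in\mathcal{Q}_t}\{E_Q[-X\,|\,\mathcal{F}_t]-c_t(Q)\}=\rho_t(X)$. Finally weak convexity even holds with equality: by linearity of $X\mapsto E_{Q_t^Y}[-X\,|\,\mathcal{F}_t]$ and $\sum_i\alpha_i=1$ one gets $\sum_i\alpha_i\Lambda^{sub}_t(Y_i;Y)=E_{Q_t^Y}[-\sum_i\alpha_iY_i\,|\,\mathcal{F}_t]-c_t(Q_t^Y)=\rho_t(Y)=\Lambda^{sub}_t(Y;Y)$.

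The main obstacle is a single subtle point in part~(b): one must ensure the argmax in \eqref{eq: Qt optimal} is actually realized by one probability measure $Q_t^Y\in\mathcal{Q}_t$ attaining the conditional essential maximum $P$-a.s.\ (and, for a statement about the whole family, that this choice can be made consistently in $Y$). This is exactly what Assumption~\ref{ass:max-realized} and Lemma~\ref{rem: subdiff} — underpinned by continuity from below via Bion-Nadal~\cite{bion-nadal-FS} — are there to supply; granting it, everything else is routine manipulation of conditional expectations and of the dual representation.
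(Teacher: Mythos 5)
Your proposal is correct and follows essentially the same route as the paper: part (a) by substituting $Y=X$ (resp.\ $Y=Z$, $Y=\alpha X+(1-\alpha)Z$) into the CAR axioms and combining with no-undercut, and part (b) by exhibiting $\Lambda^{sub}_t$ from the dual representation guaranteed by continuity from below and checking each axiom via the optimal scenario $Q_t^Y$. You merely spell out some verifications the paper labels ``easily checked'' (monotonicity, $\mathcal{F}_t$-1-cash-additivity, weak convexity with equality), so no further comment is needed.
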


\begin{proof}
a) Monotonicity of $\rho_t$ can be checked easily due to the corresponding properties of $\Lambda_t$.

If $\Lambda_t$ satisfies time-consistency of type 1, then weak time-consistency of $\rho_t$ follows. Indeed, it holds that for any $0\leq s\leq t\leq T$ and any $X \in L^\infty(\cF_T)$
$$
   \rho_s(X)=\Lambda_s(X;X)=\Lambda_s(-\Lambda_t(X;X);X)
   \leq \rho_s(-\rho_t(X)),
$$
where the second equality comes from time-consistency of type 1, while the inequality follows from the no-undercut property.

Time-consistency of type 2 of $\Lambda_t$, instead, implies time-consistency of $\rho_t$. Indeed, for any $0\leq s\leq t\leq T$ and any $X,Y\in L^\infty(\cF_T)$ we have
$$
   \rho_s(X)=\Lambda_s(X;X)=\Lambda_s(-\Lambda_t(X;X);-\rho_t(X))
   = \rho_s(-\rho_t(X)),
$$
where the second equality comes from time-consistency of type 2.
\normalcolor

Convexity: from $\rho_t(X) \triangleq \Lambda_t(X;X)$ it follows that for any $t \in [0,T]$, $\alpha \in [0,1]$ and $X,Y \in L^{\infty}(\mathcal{F}_T)$
\begin{eqnarray*}
\rho_t(\alpha X+ (1-\alpha) Y)&=& \Lambda_t (\alpha X+ (1-\alpha) Y;\alpha X+ (1-\alpha) Y) \\
&\leq & \alpha \Lambda_t (X;\alpha X+ (1-\alpha) Y)+(1-\alpha) \Lambda_t ( Y;\alpha X+ (1-\alpha) Y) \\
&\leq &  \alpha \rho_t (X)+(1-\alpha) \rho_t ( Y),
\end{eqnarray*}
where the former inequality is due to weakly convexity of $\Lambda_t$, while the latter from no-undercut.

Cash-additivity of $\rho_t$ is straightforward under $\mathcal{F}_t$-cash-additivity of $\Lambda_t$.\smallskip

b) By the hypothesis on $\rho_t$, the dual representation~\eqref{eq: rhot essmax} holds true (see Bion-Nadal~\cite{bion-nadal-FS} and the references therein). Consider now
\begin{equation*}
\Lambda_t(X;Y)= \Lambda^{sub}_t(X;Y)=E_{Q_t^Y} [-X \vert \mathcal{F}_t] - c_t(Q_t^Y),
\end{equation*}
where $Q_t^Y$ denotes an optimal scenario in the dual representation of $\rho_t(Y)$.

By definition of $Q_t^Y$, $\Lambda_t^{sub}(Y;Y)=\rho_t(Y)$ and $\Lambda^{sub}_t$ is a dynamic CAR. Monotonicity, weakly convexity and $\mathcal{F}_t$-1-cash-additivity can be easily checked. No-undercut, instead, follows by
\begin{equation*}
\Lambda_t(X;Y)= E_{Q_t^Y} [-X \vert \mathcal{F}_t] - c_t(Q_t^Y) \leq \essmax_{Q \in \mathcal{Q}_t} \{E_Q [-X \vert \mathcal{F}_t] - c_t(Q) \}=\rho_t(X)
\end{equation*}
for any $t \in [0,T]$ and $X,Y \in L^{\infty}(\mathcal{F}_T)$. Furthermore, sub-allocation is also fulfilled. Indeed, for any $Y_1,...,Y_n, Y \in L^{\infty}(\mathcal{F}_T)$ such that $Y=\sum_{i=1}^n Y_i$ and $t \in [0,T]$ it holds that
\begin{eqnarray*}
\Lambda^{sub}_t \left(\sum_{i=1}^n Y_i;Y \right) &=& E_{Q_t^Y} \left[\left. -\sum_{i=1}^n Y_i \right| \mathcal{F}_t\right] - c_t(Q_t^Y) \\
&\geq & \sum_{i=1}^n E_{Q_t^Y} [- Y_i \vert \mathcal{F}_t] - n c_t(Q_t^Y) \\
&=& \sum_{i=1}^n \{ E_{Q_t^Y} [- Y_i \vert \mathcal{F}_t] - c_t(Q_t^Y) \}\\
&=& \sum_{i=1}^n \Lambda^{sub}_t(Y_i;Y).
\end{eqnarray*}
\end{proof}
\medskip

Note that, starting from a dynamic and time-consistent CAR, we can obtain a time-consistent dynamic risk measure. The converse implication, instead, needs a further investigation. A further step will be then to investigate the existence of a time-consistent CAR induced by a time-consistent dynamic risk measure.
For a $\rho_t$ coming from a $g$-expectation we will show in the next section that the existence is guaranteed.
\medskip

\section{CARs associated to a $\rho_t$ induced by a $g$-expectation}\label{sec:g-expectation}

In the following, we will restrict our attention to dynamic risk measures that are induced by $g$-expectations. The main motivations for this choice can be summarized as follows. First, as shown in~\cite{coquet-et-al,peng-05,RG}, a wide family of dynamic time-consistent convex risk measures satisfying some further assumptions come from a $g$-expectation.
Second and somehow related to the previous point, there is a huge literature on risk measures induced by BSDEs in a Brownian setting or in a setting with jumps. See, among others, Barrieu and El Karoui~\cite{barrieu-elkaroui}, Delbaen et al.~\cite{DPR}, Rosazza Gianin~\cite{RG}, Laeven and Stadje~\cite{laeven-stadje}, Quenez and Sulem~\cite{quenez-sulem}, Calvia and Rosazza Gianin~\cite{calvia-rg}. Finally, some recent works (see Kromer and Overbeck~\cite{kromer-overbeck-bsde,kromer-overbeck-volterra} and Mabitsela et al.~\cite{mabitsela-etal}) already focus on dynamic risk measures induced by BSDEs and Volterra equations to investigate the gradient allocation.
\bigskip

Let $(\rho_t)_{t \in [0,T]}$ be a dynamic convex and time-consistent risk measure that is induced by a $g$-expectation in a Brownian setting, i.e., $(\rho_t(X), Z_t^X)$ solves the following BSDE
\begin{equation} \label{eq: rhot - g-exp}
\rho_t(X)=-X+ \int_t ^T g_{\rho} (s,Z_s^X) \, ds - \int_t^T Z_s^X \, dB_s, \quad X \in L^{\infty}(\mathcal{F}_T),
\end{equation}
for a suitable driver $g_{\rho}$ satisfying the usual assumptions and convexity in $z$. In particular, $g_{\rho}$ is uniformly Lipschitz in $z$ and $g_{\rho}(s,0)=0$ for any $s \in [0,T]$.


Inspired by Kromer and Overbeck~\cite{kromer-overbeck-bsde} where the authors proved that, under Gateaux differentiability of $\rho_t$, the gradient CAR for a $\rho_t$ as in~\eqref{eq: rhot - g-exp} satisfies a BSDE with a driver depending on the gradient of $g_{\rho}$, we now introduce a general formulation of dynamic CARs also induced by a $g$-expectation but going beyond the gradient approach and under weaker assumptions than differentiability.
Assume now that also the dynamic CAR $\Lambda_t$ we are looking for is induced by a $g$-expectation with a different driver $g_{\Lambda}$. More precisely, assume that
\begin{equation} \label{eq: lambdat - g-exp}
\Lambda_t(X;Y)=-X+ \int_t ^T g_{\Lambda} (s,Z_s^{X,Y}, Z_s^Y) \, ds - \int_t^T Z_s^{X,Y} \, dB_s,
\end{equation}
where $Z_s^{X,Y}$ is part of the solution while $Z_s^Y$ comes from
\begin{equation*}
\rho_t(Y)=-Y+ \int_t ^T g_{\rho} (s,Z_s^Y) \, ds - \int_t^T Z_s^Y \, dB_s,
\end{equation*}
and with $g_{\Lambda}(s,z,z^y)$ satisfying (for any $z^y \in \Bbb R^d$) the usual non-normalized assumptions on $z$ and the condition
\begin{equation} \label{eq: glambda -grho}
g_{\Lambda}(s,z,z)= g_{\rho} (s,z) \quad \mbox{for any } s\in [0,T], z \in \mathbb{R}^d.
\end{equation}
In this case, the driver $g_{\Lambda}(s,z,z^y)$ depends then on an additional parameter $z^y$. Note that the assumptions on $g_{\Lambda}$ imply the existence and uniqueness of the solution $(\Lambda_t(X;Y);Z_t^{X,Y})_{t \in [0,T]}$ of~\eqref{eq: lambdat - g-exp}, while condition~\eqref{eq: glambda -grho} guarantees that $\Lambda_t$ is a dynamic CAR, i.e., $\Lambda_t(Y;Y)=\rho_t(Y)$ for any $t \in [0,T]$ and $Y \in L^{\infty} (\mathcal{F}_T)$.

Given a dynamic risk measure $(\rho_t)_{t \in [0,T]}$ induced by a $g_{\rho}$-expectation, it is then possible to define several dynamic capital allocations induced by a $g_{\Lambda}$-expectation with $g_{\Lambda}$ fulfilling condition~\eqref{eq: glambda -grho}. Viceversa, given a dynamic family $(\Lambda_t)_{t \in [0,T]}$ induced by a $g_{\Lambda}$-expectation, the associated dynamic risk measure is uniquely determined via \eqref{eq: glambda -grho}.

As already discussed above, the assumption of a dynamic CAR induced by a $g$-expectation generalizes the gradient case and, in view of a result of Coquet et al.~\cite{coquet-et-al} (see also Remark~\ref{rem: rem BSDE} below), seems to be rather reasonable for risk measures coming from $g$-expectations.

\begin{remark} \label{rem: rem BSDE}
a) If $(\rho_t)_{t\in [0,T]}$ is Gateaux differentiable at any time $t\in [0,T]$, then the gradient allocation $(\Lambda_t^{grad})_{t\in [0,T]}$ is of the form \eqref{eq: lambdat - g-exp} with
\begin{equation} \label{eq: g-gradient allocation}
g_{\Lambda} (s,Z_s^{X,Y}, Z_s^Y)= \nabla g_\rho(s,Z_s^Y) \cdot Z_s^{X,Y},
\end{equation}
where, in the $d$-dimensional case, $\nabla g_\rho$ stands for the gradient of $g_\rho$ with respect to $z$.
We observe that this result has been already proved in~\cite[Theorem 3.1]{kromer-overbeck-bsde} for BSDEs where $\nabla  g_\rho (s,Z_s^Y) $ is from a BMO\footnote{Note that in~\cite{kromer-overbeck-bsde} $g_{\rho}$ may have quadratic growth in $z$, hence $\nabla  g_\rho$ is not bounded in general. $\nabla  g_\rho (s,Z_s^Y)$ is then assumed to be from a BMO in order to be able to apply Girsanov Theorem and to define an equivalent probability measure in terms of $\nabla  g_\rho$. For a precise definition of BMO we refer to Kazamaki~\cite{kazamaki}.}.

b) It seems quite reasonable to assume that a dynamic CAR associated to a dynamic risk measure as in \eqref{eq: rhot - g-exp} is induced by a $g_{\Lambda}$-expectation. Indeed, let $(\rho_t)_{t\in [0,T]}$ be as in~\eqref{eq: rhot - g-exp} and let $(\Lambda_t)_{t\in [0,T]}$ be convex, monotone and cash-additive in $X$, and satisfying time-consistency. Assume, in addition, that $\Lambda_t$ satisfies $\mathcal{E}_{\bar{\mu}}$-dominance for some $\bar{\mu}>0$, that is, \begin{equation*}
\Lambda_t(X;Y) -\Lambda_t(Z;Y) \leq  \mathcal{E}_{\bar{\mu}}(-(X-Z) \vert \mathcal{F}_t), \quad \mbox{ for any } X,Y,Z \in L^{\infty}(\mathcal{F}_T),t\in [0,T].
\end{equation*}
It then follows by Coquet et al.~\cite{coquet-et-al} that $(\Lambda_t)_{t\in [0,T]}$ is also induced by a $\bar{g}$-expectation for some $\bar{\mu}$-Lipschitz driver $\bar{g}=\bar{g}_Y$, i.e., there exists some $\bar{g}_Y(s,z) \leq \bar{\mu} \vert z \vert$ such that $\Lambda_t(X;Y) =  \mathcal{E}_{\bar{g}_Y}(-X \vert \mathcal{F}_t)$ for any $X,Y \in L^{\infty} (\mathcal{F}_T)$.

Note that $\mathcal{E}_{\mu}$-dominance of $(\Lambda_t)_{t\in [0,T]}$ is guaranteed, for instance, when
\begin{equation*}
\Lambda_t(X;Y) -\Lambda_t(Z;Y) \leq  \rho_t(X-Z), \quad \mbox{ for any } X,Y,Z \in L^{\infty}(\mathcal{F}_T), t\in [0,T].
\end{equation*}
The condition above seems to be a reasonable generalization of
\begin{equation*}
\Lambda_t(X;Y) \leq \rho_t(X)=\mathcal{E}_{g_{\rho}}(-X \vert \mathcal{F}_t) \leq \mathcal{E}_{\mu}(-X \vert \mathcal{F}_t), \quad \text{for any } X,Y \in L^{\infty}(\mathcal{F}_T), t\in [0,T],
\end{equation*}
that is automatically fulfilled under no-undercut.
\normalcolor
\end{remark}

Assume now that $\rho_t$ is convex but non necessarily Gateaux differentiable
and assume that $\partial \rho_t(X) \neq \emptyset$, that is, $\rho_t$ is subdifferentiable at $X$ (see Lemma~\ref{rem: subdiff} and~\cite{rusz-shapiro} for a detailed discussion).
As from Lemma~\ref{rem: subdiff}, we can identify an element of $\partial \rho_t(X)$ with the corresponding probability measure $Q$.

%
%

We are now ready to prove the existence of a dynamic CAR that is time-consistent for a dynamic time-consistent risk measure induced by a $g$-expectation, as a follow-up of Proposition~\ref{prop: from lambda to rho and viceversa}-b).

With an abuse of notation, formulations containing the term $\partial g_{\rho}$ should be read as follows: to any element of the set $\partial g_{\rho}$ we associate an element (e.g., of $\partial \rho_t(X)$, $\Lambda_t^{sub}$ or $g_{\Lambda}$). To be more precise, in \eqref{eq: Qt optimal-subdiff g} to any element of the family $(\partial g_{\rho}(u,Z_u^X))_{u \in [0,t]}$ of the subdifferential of $g_{\rho}$ it corresponds an element $Q_t^X$ of the subdifferential $\partial \rho_t(X)$ and, consequently, a different element of the family $\Lambda^{sub}$. Or, better, similarly to the general case, $\Lambda^{sub}$ is not uniquely defined but corresponds to a family of CARs depending on the choice of the optimal scenario to which is associated. A discussion on the subdifferentiability of $g_{\rho}$ is postponed to Remark~\ref{rem: subdif g}. \smallskip
\normalcolor

\begin{proposition}[Existence of a CAR satisfying time-consistency] \label{prop: existence-subdiff}
Let $(\rho_t)_{t\in[0,T]}$ be a dynamic convex risk measure that is induced by a $g_{\rho}$-expectation as in~\eqref{eq: rhot - g-exp}.

If $g_{\rho}$ is subdifferentiable, then $\rho_t$ is also subdifferentiable and elements $Q_t^X$ defined by
\begin{equation} \label{eq: Qt optimal-subdiff g}
E\left[ \left.\frac{dQ_t^X}{dP} \right| \mathcal{F}_t\right]  \triangleq \mathcal{E} (\partial g_{\rho}(t,Z_t^X) \cdot B)= \exp \left\{-\frac 12 \int_0^t \Vert \partial g_{\rho}(u,Z_u^X)\Vert ^2 \, d u - \int_0^t \partial g_{\rho}(u,Z_u^X) \, dB_u \right\}
\end{equation} belong to $\partial \rho_t(X)$.

Furthermore, $\Lambda^{sub}_t(X;Y)$ related to $Q_t^X$ as above is a (family of) time-consistent dynamic CAR(s) satisfying monotonicity, no-undercut and sub-allocation. Existence of a CAR satisfying the axioms above is therefore guaranteed.

Moreover, $\Lambda^{sub}_t(X;Y)$ satisfies the following BSDE
\begin{equation} \label{eq: lambdat-sub- bsde}
\Lambda^{sub}_t(X;Y)=-X + \int_t ^T \left[g_{\rho}(s,Z_s^Y)+\partial g_{\rho}(s,Z_s^Y)\cdot  (Z_s^{X,Y} -Z_s^Y ) \right] \, ds - \int_t^T Z_s^{X,Y} \, dB_s.
\end{equation}
\end{proposition}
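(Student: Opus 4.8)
The plan is to combine convexity of the driver with Girsanov's theorem and with the representation of linear BSDEs. Since $g_{\rho}(\omega,s,\cdot)$ is convex and finite (being $\mu$-Lipschitz), its subdifferential $\partial_z g_{\rho}(\omega,s,z)$ is non-empty everywhere, and one may fix once and for all a predictable selection $(s,\omega,z)\mapsto \partial g_{\rho}(s,z)\in\partial_z g_{\rho}(s,z)$ (a measurable selection of a measurable, closed-convex-valued multifunction); because $g_{\rho}$ is uniformly Lipschitz in $z$, every such selector satisfies $\|\partial g_{\rho}(s,z)\|\leq\mu$, so the exponential in~\eqref{eq: Qt optimal-subdiff g} is a true martingale (bounded in $L^p$ for every $p$) and $Q_t^X$ is a well-defined probability measure, equivalent to $P$ and coinciding with $P$ on $\mathcal{F}_t$. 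First I would prove subdifferentiability of $\rho_t$ and optimality of $Q_t^X$: fixing $X$ with solution $(\rho_\cdot(X),Z^X)$ of~\eqref{eq: rhot - g-exp} and any $Y\in L^{\infty}(\mathcal{F}_T)$ with solution $(\rho_\cdot(Y),Z^Y)$, subtracting the two BSDEs and using $g_{\rho}(s,Z_s^Y)-g_{\rho}(s,Z_s^X)\geq\partial g_{\rho}(s,Z_s^X)\cdot(Z_s^Y-Z_s^X)$ gives
\begin{equation*}
\rho_t(Y)-\rho_t(X)\geq -(Y-X)+\int_t^T\partial g_{\rho}(s,Z_s^X)\cdot(Z_s^Y-Z_s^X)\,ds-\int_t^T(Z_s^Y-Z_s^X)\,dB_s .
\end{equation*}
By Girsanov's theorem $Q_t^X$ turns $B$ into a Brownian motion with drift $s\mapsto\partial g_{\rho}(s,Z_s^X)$, so the two integrals on the right combine into a $Q_t^X$-stochastic integral of $Z^Y-Z^X$; as $X,Y$ are bounded the integrals $\int Z^X\,dB$ and $\int Z^Y\,dB$ are BMO martingales, so this integral is a genuine $Q_t^X$-martingale, and taking $E_{Q_t^X}[\,\cdot\mid\mathcal{F}_t]$ (the left-hand side being $\mathcal{F}_t$-measurable) yields $\rho_t(Y)-\rho_t(X)\geq E_{Q_t^X}[-(Y-X)\mid\mathcal{F}_t]$. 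Hence $Q_t^X\in\partial\rho_t(X)\neq\emptyset$, so $\rho_t$ is subdifferentiable, and by Lemma~\ref{rem: subdiff} $Q_t^X$ is optimal in~\eqref{eq: rhot essmax}, i.e. $\rho_t(X)=E_{Q_t^X}[-X\mid\mathcal{F}_t]-c_t(Q_t^X)$.

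Next I would check that $\Lambda^{sub}_t(X;Y):=E_{Q_t^Y}[-X\mid\mathcal{F}_t]-c_t(Q_t^Y)$ is a dynamic CAR with the asserted properties. Since $Q_t^Y\in\partial\rho_t(Y)\subseteq\mathcal{Q}_t$, one has $c_t(Q_t^Y)\in L^{\infty}(\mathcal{F}_t)$ and $\Lambda^{sub}_t(Y;Y)=\rho_t(Y)$, so $(\Lambda^{sub}_t)_t$ is a dynamic CAR; monotonicity is immediate; no-undercut is the inequality $c_t(Q_t^Y)\geq E_{Q_t^Y}[-X\mid\mathcal{F}_t]-\rho_t(X)$ read off the definition of the minimal penalty; and sub-allocation follows exactly as in Proposition~\ref{prop: from lambda to rho and viceversa}-b), using in addition $c_t(Q_t^Y)\geq E_{Q_t^Y}[0\mid\mathcal{F}_t]-\rho_t(0)=0$ (here $\rho_t(0)=0$ since $g_{\rho}(s,0)=0$).

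For the BSDE representation and time-consistency I would use~\eqref{eq: lambdat subdiff-3} and the optimality of $Q_t^Y$ to write $\Lambda^{sub}_t(X;Y)=\rho_t(Y)+E_{Q_t^Y}[Y-X\mid\mathcal{F}_t]$; the process $V_t:=E_{Q_t^Y}[Y-X\mid\mathcal{F}_t]$ is a $Q_t^Y$-martingale with terminal value $Y-X$, so by martingale representation under $Q_t^Y$ and Girsanov there is a predictable $W$ with $V_t=(Y-X)+\int_t^T\partial g_{\rho}(s,Z_s^Y)\cdot W_s\,ds-\int_t^T W_s\,dB_s$. Adding this to~\eqref{eq: rhot - g-exp} written for $Y$ and setting $Z_s^{X,Y}:=Z_s^Y+W_s$ yields precisely~\eqref{eq: lambdat-sub- bsde}. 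In particular, for fixed $Y$ the family $\Lambda^{sub}_\cdot(\cdot;Y)$ solves a BSDE whose driver $z\mapsto g_{\rho}(s,Z_s^Y)+\partial g_{\rho}(s,Z_s^Y)\cdot(z-Z_s^Y)$ is affine in $z$, hence $\mu$-Lipschitz, satisfies the non-normalized usual assumptions and condition~\eqref{eq: glambda -grho}; so $\Lambda^{sub}_\cdot(\cdot;Y)$ is a $g_{\Lambda}$-expectation and the flow property of BSDEs gives $\Lambda^{sub}_s(X;Y)=\Lambda^{sub}_s(-\Lambda^{sub}_t(X;Y);Y)$, i.e. time-consistency of type 1. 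Time-consistency of type 2 follows too: since $(\rho_t)$ is time-consistent, the flow property of~\eqref{eq: rhot - g-exp} gives $Z_u^{-\rho_t(Y)}=Z_u^Y$ for $u\in[s,t]$, hence $Q_s^{-\rho_t(Y)}$ and $Q_s^Y$ coincide on $\mathcal{F}_t$ and $\Lambda^{sub}_s(\,\cdot\,;-\rho_t(Y))=\Lambda^{sub}_s(\,\cdot\,;Y)$ on $\mathcal{F}_s$, which combined with type 1 gives the claim.

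The main obstacle is the first step: turning the subgradient of $g_{\rho}$ into a bona fide predictable process via measurable selection, and then justifying rigorously that the Girsanov change of measure is legitimate and that the stochastic integrals obtained after it are true $Q_t^X$-martingales, so that their $\mathcal{F}_t$-conditional expectations vanish. This is where the uniform Lipschitz bound $\|\partial g_{\rho}\|\leq\mu$ and the boundedness of the terminal data (giving BMO bounds on the control processes) are essential; everything else is bookkeeping with linear BSDEs, the definition of $c_t$, and the recursivity of $g$-expectations.
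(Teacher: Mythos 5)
The first half of your argument is essentially the paper's proof: the bound $\Vert \partial g_{\rho}\Vert \leq \mu$ (which the paper gets from $g^*_{\rho}(s,q)=+\infty$ for $\Vert q\Vert>\mu$), the subgradient inequality applied to the difference of the two BSDEs, Girsanov, and conditioning under $Q_t^X$; optimality of $Q_t^X$ then follows as you say from the definition of the minimal penalty (the converse direction of Lemma~\ref{rem: subdiff}). Your derivation of \eqref{eq: lambdat-sub- bsde} from $\Lambda^{sub}_t(X;Y)=\rho_t(Y)+E_{Q_t^Y}[Y-X\,|\,\mathcal{F}_t]$ via martingale representation under $Q_t^Y$ is precisely the alternative route the paper itself records in the remark after the proposition; the paper instead plugs in the explicit penalty $c_t(Q_t^Y)=E_{Q_t^Y}[\int_t^T g^*_{\rho}(s,\partial g_{\rho}(s,Z_s^Y))\,ds\,|\,\mathcal{F}_t]$ from Delbaen--Peng--Rosazza Gianin and Barrieu--El Karoui. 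Monotonicity, no-undercut and sub-allocation are handled as in Proposition~\ref{prop: from lambda to rho and viceversa}-b), as in the paper.

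The gap is in the time-consistency step. The driver in \eqref{eq: lambdat-sub- bsde} is affine in $z$ but \emph{not} normalized: $g_{\Lambda}(s,0,Z_s^Y)=g_{\rho}(s,Z_s^Y)-\partial g_{\rho}(s,Z_s^Y)\cdot Z_s^Y=-g^*_{\rho}(s,\partial g_{\rho}(s,Z_s^Y))$, which is nonzero unless the penalty vanishes along the optimal scenario (essentially the coherent case). Hence the flow property does not give type 1: $\Lambda^{sub}_s(X;Y)$ is the value at $s$ of the BSDE run on $[s,t]$ with terminal value $\Lambda^{sub}_t(X;Y)$ at time $t$, whereas $\Lambda^{sub}_s(-\Lambda^{sub}_t(X;Y);Y)$ runs the BSDE on all of $[s,T]$ with that ($\mathcal{F}_t$-measurable) terminal value at $T$, thereby charging the $[t,T]$ part of the penalty a second time; a direct computation gives $\Lambda^{sub}_s(-\Lambda^{sub}_t(X;Y);Y)=\Lambda^{sub}_s(X;Y)-E_{Q_t^Y}[c_t(Q_t^Y)\,|\,\mathcal{F}_s]$. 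For the same reason your claim that $\Lambda^{sub}_s(\cdot\,;-\rho_t(Y))=\Lambda^{sub}_s(\cdot\,;Y)$ is not justified: the kernels agree only on $[s,t]$, and the two penalties $c_s(Q_s^{-\rho_t(Y)})$ and $c_s(Q_s^Y)$ differ by the $[t,T]$ contribution. So your logic (type 1 by the flow property, then type 2 from type 1) runs in the direction that cannot be justified in the genuinely convex case. What can be proved directly is type 2, and this is how the paper argues: replacing the second argument by $-\rho_t(Y)$ and using time-consistency of $\rho$ gives $Z_u^{-\rho_t(Y)}=Z_u^Y$ on $[s,t]$ and $Z_u^{-\rho_t(Y)}=0$ on $(t,T]$, so that on $(t,T]$ the driver $g_{\Lambda}(u,\cdot,Z_u^{-\rho_t(Y)})$ vanishes at $z=0$ (because $g_{\rho}(u,0)=0$), the solution stays flat at the $\mathcal{F}_t$-measurable terminal value, and the recursion closes on $[s,t]$ by uniqueness. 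You should therefore prove type 2 directly along these lines; type 1 only comes for free when $g^*_{\rho}(s,\partial g_{\rho}(s,Z_s^Y))=0$ (e.g., positively homogeneous $g_{\rho}$, cf. Proposition~\ref{prop: as-dynamic}-b)), and indeed the last line of the paper's own proof, which infers type 1 from the identification $\Lambda^{sub}_s(\cdot\,;-\rho_t(Y))=\Lambda^{sub}_s(\cdot\,;Y)$, relies on the same step you would need to justify.
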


\begin{proof}
Let $g_{\rho}$ be subdifferentiable and convex and let $\bar{q} \in \partial g_{\rho}(s,z)$, where $\partial g_{\rho}(s,z) \triangleq \{q \in \bR ^d: g_{\rho}(s,u) \geq g_{\rho}(s,z) + q \cdot (u-z) \mbox{ for any } u \in \bR ^d\}$. It then holds that $g_\rho$ is continuous in the second variable and $g_{\rho}(s,z)=\bar{q} \cdot z- g^*_{\rho}(s,\bar{q})$ where $g^*_{\rho}$ is the convex conjugate of $g_{\rho}$. Note that any element $\bar{q} \in \partial g_{\rho}(s,z)$ is such that $\Vert \bar{q} \Vert \leq \mu$ where $\mu >0$ is the Lipschitz constant of $g_{\rho}$. This follows because $g^*_{\rho}(s,\bar{q})=+\infty$ for $\Vert \bar{q} \Vert >\mu$ (see, e.g.,~\cite{DPR}, Prop. 3.6) and $g_{\rho}(s,z)=\bar{q} \cdot z- g^*_{\rho}(s,\bar{q})$ is finite. In the following, we sometimes denote $g=g_{\rho}$ when there is no possible misunderstanding. With a slight abuse of notation, we also indicate
an element in the subdifferential of $g(s,z)$ by $\partial g(s,z)$.

We start proving that $\rho_t$ is also subdifferentiable and that $Q_t^X$ in~\eqref{eq: Qt optimal-subdiff g} belongs to $\partial \rho_t(X)$.
Fix now $X \in L^{\infty}(\mathcal{F}_T)$ and consider any $Y \in L^{\infty}(\mathcal{F}_T)$. By~\eqref{eq: rhot - g-exp},
\begin{eqnarray*}
\rho_t(Y) - \rho_t(X) &=&  -(Y-X) + \int_t ^T \left[ g_{\rho} (s,Z_s^Y)-g_{\rho} (s,Z_s^X) \right]\, ds - \int_t^T \left( Z_s^Y- Z_s^X \right) \, dB_s \\
   &\geq & -(Y-X) + \int_t ^T \partial g(s,Z_s^X ) \cdot \left(Z_s^Y - Z_s^X \right)\, ds - \int_t^T \left( Z_s^Y- Z_s^X \right) \, dB_s \\
   &\geq & -(Y-X) - \int_t ^T  \left(Z_s^Y - Z_s^X \right)\, d\bar{B}^{Q^X}_s
\end{eqnarray*}
where $d\bar{B}^{Q^X}_s= dB_s - \partial g(s,Z_s^X ) \, ds$ and, by Girsanov Theorem, $(\bar{B}^{Q^X}_t)_{t \in [0,T]}$ is a $Q^X$-Brownian motion. By taking the conditional expectation with respect to $Q_t^X$ of the first and last term in the chain of inequalities above, it follows that
\begin{equation*}
\rho_t(Y) - \rho_t(X) \geq E_{Q_t^X} [-(Y-X) \vert \mathcal{F}_t] \quad \mbox{for any } Y\in L^{\infty}(\mathcal{F}_T),
\end{equation*}
hence $\rho_t$ is subdifferentiable at $X$ and $Q_t^X \in \partial \rho_t(X)$ is an optimal scenario.

It is easy to check that if $\rho_t$ is subdifferentiable at $X \in L^{\infty}(\cF_T)$, then $\rho_t$ is continuous from above at $X$.
By the arguments above and by the characterization of the penalty term in Delbaen et al.~\cite{DPR}, Theorem 3.2, and Barrieu and El Karoui~\cite{barrieu-elkaroui}, Theorem 7.4, it then follows that
\begin{eqnarray*}
\rho_t(X) &=&  E_{Q_t^X} [-X \vert \mathcal{F}_t]-c_t(Q_t^X) \\
&=&  E_{Q_t^X} [-X \vert \mathcal{F}_t]- E_{Q_t^X} \left[\left. \int_t^T g^*_{\rho} (s,\partial g(s,Z_s^X )) \, ds \right| \mathcal{F}_t\right]\\
&=&  E_{Q_t^X} \left[\left. -X - \int_t^T \left[ \partial g(s,Z_s^X ) \cdot Z_s^X -g(s,Z_s^X) \right] \, ds \right| \mathcal{F}_t \right].
\end{eqnarray*}
By similar arguments and by the martingale representation theorem, there exists a unique stochastic process $(Z_s^{X,Y})_{s \in [0,T]}$ such that
\begin{eqnarray*}
\Lambda^{sub}_t(X;Y) &=&  E_{Q_t^Y} [-X \vert \mathcal{F}_t]-c_t(Q_t^Y) \\
&=&  E_{Q_t^Y} \left[-X - \int_t^T \left[ \partial g(s,Z_s^Y ) \cdot Z_s^Y -g(s,Z_s^Y) \right] \, ds \vert \mathcal{F}_t \right] \\
&=&  -X - \int_t^T \left[ \partial g(s,Z_s^Y ) \cdot Z_s^Y -g(s,Z_s^Y) \right] \, ds - \int_t ^T Z_s^{X,Y} \, d\bar{B}^{Q^Y}_s  \\
&=&  -X + \int_t^T \left[ \partial g(s,Z_s^Y ) \cdot (Z_s^{X,Y} - Z_s^Y) +g(s,Z_s^Y)  \right] \, ds - \int_t ^T Z_s^{X,Y} \, dB_s ,
\end{eqnarray*}
hence $(\Lambda^{sub}_t(X;Y), Z_t^{X,Y})$ solves a $g_{\Lambda}$-expectation with final condition $-X$ and driver
\begin{equation*}
g_{\Lambda}(s,Z_s^{X,Y},Z_s^Y)= \partial g(s,Z_s^Y ) \cdot (Z_s^{X,Y} - Z_s^Y) +g(s,Z_s^Y)
\end{equation*}
depending on $Z_s^Y$. It is straightforward to check that $g_{\Lambda} (s,0,Z_s^Y)=g(s,Z_s^Y)-\partial g(s,Z_s^Y ) \cdot Z_s^Y$ satisfies $(g_2)$ since $g$ satisfies condition~\eqref{g-condition} and, for any $s\in [0,T]$, $z\in \bR^d$, $q \in\partial g(s,z)$ is such that $\Vert q\Vert \leq \mu$
where $\mu>0$ is the Lipschitz constant of $g$.
We now prove time-consistency of type 1 and 2, i.e.
$$\Lambda_{s}^{sub}(-\Lambda_t^{sub}(X;Y);Y)=\Lambda_s^{sub}(X;Y) \quad \text{and} \quad
\Lambda_{s}^{sub}(-\Lambda_t^{sub}(X;Y);-\rho_t(Y))=\Lambda_s^{sub}(X;Y),$$
for any $0\leq s\leq t\leq T$ and any $X,Y\in L^\infty$.
We observe that
\begin{align*}
   \Lambda_s^{sub}(X;Y)&=
   -X + \int_{s}^T g_\Lambda(s,Z_s^{X,Y},Z_s^Y) \, ds
   -\int_s^T Z^{X,Y}_s dB_s\\
   & = -X + \int_{t}^T g_\Lambda(s,Z_s^{X,Y},Z_s^Y) \, ds
   -\int_t^T Z^{X,Y}_s dB_s \\
   & \ \ \ \ + \int_{s}^t g_\Lambda(s,Z_s^{X,Y},Z_s^Y) \, ds
   -\int_s^t Z^{X,Y}_s dB_s\\
   &= \Lambda_t^{sub}(X;Y)  + \int_{s}^t g_\Lambda(s,Z_s^{X,Y},Z_s^Y) \, ds
   -\int_s^t Z^{X,Y}_s dB_s.
\end{align*}
Moreover, $ \Lambda_s^{sub} (-\Lambda_t^{sub}(X;Y);-\rho_t(Y) )$ solves the following BSDE
\begin{align*}
  \Lambda_s^{sub} (-\Lambda_t^{sub}(X;Y);-\rho_t(Y) )&=
  \Lambda_t^{sub}(X;Y) + \int_{s}^t g_\Lambda(s,\bar{Z}_s,Z_s^{-\rho_t(Y)}) \, ds
   -\int_s^t \bar{Z}_s dB_s\\
    &=\Lambda_t^{sub}(X;Y) + \int_{s}^t g_\Lambda(s,\bar{Z}_s,Z_s^{Y}) \, ds
   -\int_s^t \bar{Z}_s dB_s.
\end{align*}
where the last equality follows from $Z_s^{-\rho_t(Y)}= Z_s^Y$, which is due to time-consistency of $(\rho_t)_{t\in [0,T]}$.


\textcolor{black}{We then conclude that $ \Lambda_s^{sub} (-\Lambda_t^{sub}(X;Y);-\rho_t(Y) )=\Lambda_s^{sub} (-\Lambda_t^{sub}(X;Y);Y )=  \Lambda_s^{sub}(X;Y)$, i.e. $\Lambda_s^{sub} $ satisfies both time-consistency of type 1 and 2.
}
Monotonicity, no-undercut and sub-allocation have been already proved in Proposition~\ref{prop: from lambda to rho and viceversa}-b).
\end{proof}
\medskip

\begin{remark}
a) For dynamic
\color{black}coherent differentiable risk measures induced by $g_{\rho}$-expectations with differentiable $g_{\rho}$, the result above reduces to Theorem 3.1 of Kromer and Overbeck~\cite{kromer-overbeck-bsde}.
 Indeed, if $(\rho_t)_{t\in [0,T]}$ is coherent, then the term corresponding to the penalty term disappears. It is then easy to check that differentiability of $g_{\rho}$ implies that also $\rho_t$ is Gateaux differentiable
 \normalcolor at any time $t\in [0,T]$ and $\Lambda^{sub}_t(X;Y)=\Lambda^{grad}_t(X;Y)$ satisfies a $g_{\Lambda}$-expectation with final condition $-X$ and driver
\begin{equation*}
g_{\Lambda}(s,Z_s^{X,Y},Z_s^Y)= \partial g_{\rho}(s,Z_s^Y ) \cdot Z_s^{X,Y} =\nabla g_{\rho}(s,Z_s^Y ) \cdot Z_s^{X,Y}.
\end{equation*}
\color{black}As recalled previously, in~\cite{kromer-overbeck-bsde} $\nabla  g_\rho (s,Z_s^Y)$ is assumed to be from a BMO because $g_{\rho}$ may have quadratic growth in $z$. In our case, instead, the BMO assumption can be dropped since $g_{\rho}$ is Lipschitz in $z$, hence $\partial g$ is bounded.
\normalcolor

b) \color{black}
Note that $Z_s^{X,Y}$ in~\eqref{eq: lambdat-sub- bsde} coincides with $Z_s^X$ when $X=Y$. Indeed, for any $X \in L^{\infty} (\mathcal{F}_T)$
\begin{eqnarray*}
\Lambda^{sub}_t(X;X) &=& \rho_t(X) \\
&=& -X+ \int_t ^T g_{\rho} (s,Z_s^X) \, ds - \int_t^T Z_s^X \, dB_s \\
&=& -X+ \int_t ^T \left[g_{\rho}(s,Z_s^X)- \partial g_{\rho}(s,Z_s^X ) \cdot (Z_s^X - Z_s^X) \right] \, ds - \int_t^T Z_s^X \, dB_s.
\end{eqnarray*}
By~\eqref{eq: lambdat-sub- bsde} and from the uniqueness of the solution of a BSDE, it then follows that $Z_s^{X,X}=Z_s^X$.
\normalcolor

c) An alternative proof of the previous result can be driven by using the formulation
\begin{equation*}
\Lambda^{sub}_t(X;Y)= \rho_t(Y) -E_{Q_t^Y} [-(Y-X) \vert \mathcal{F}_t]
\end{equation*}
and by applying the martingale representation theorem to both the terms $E_{Q_t^Y} [-Y \vert \mathcal{F}_t]$ and $E_{Q_t^Y} [X \vert \mathcal{F}_t]$.
\end{remark}

\color{black} In the following, we provide some particular cases of dynamic CARs from a $g_{\Lambda}$-expectation and of the corresponding drivers $g_{\Lambda}$.
\normalcolor

\begin{remark}
Assume that $\rho_t$ is a dynamic convex risk measure that is induced by a $g_{\rho}$-expectation.
The gradient, subdifferential and marginal CAR are compatible with the formulation of dynamic CARs by means of $g_{\Lambda}$-expectations and, as discussed below, can be obtained by choosing a suitable driver $g_{\Lambda}$. See~\cite{delbaen-pisa,denault,centrone-rg,kalkbrener,tasche,tsanakas} for the static versions. \medskip

\emph{Gradient case:} if $\rho_t$ is Gateaux differentiable, then Kromer and Overbeck~\cite{kromer-overbeck-bsde} showed that $\Lambda^{grad}_t(X;Y)$ solves a BSDE with driver
\begin{equation*}
g^{grad}_{\Lambda}(s,z,z^y)= \nabla g_{\rho} (s,z^y) \cdot z.
\end{equation*}

\emph{Subdifferential case:} if $\rho_t$ is only subdifferentiable, then the previous result shows that $\Lambda^{sub}_t(X;Y)$ solves a BSDE with driver
\begin{equation*}
g^{sub}_{\Lambda}(s,z,z^y)= \partial g_{\rho}(s,z^y )\cdot (z - z^y) +g(s,z^y),
\end{equation*}
satisfying condition~\eqref{eq: glambda -grho}. As already pointed out previously, $\partial g_{\rho}(s,z^y )$ is not necessarily unique but it is assumed to be chosen and fixed for any $s$ and $z^y$.\smallskip

\emph{Marginal case:} the marginal dynamic $\Lambda^{marg}_t(X;Y)= \rho_t(Y)-\rho_t (Y-X)$ solves
\begin{eqnarray*}
\Lambda^{marg}_t(X;Y) &=& \rho_t(Y)-\rho_t (Y-X) \\
&=&  -X + \int_t ^T  \left[ g_{\rho} (s,Z_s^Y) - g_{\rho} (s,Z_s^{Y-X}) \right] \, ds - \int_t^T \left( Z_s^Y -Z_s^{Y-X} \right) \, dB_s \\
&=&  -X + \int_t ^T  g_{\Lambda}^{\color{black} marg} (s,Z_s^{X,Y}, Z_s^Y) \, ds - \int_t^T Z_s^{X,Y} \, dB_s \\
\end{eqnarray*}
by denoting $Z^{X,Y}_s \triangleq Z_s^Y -Z_s^{Y-X}$ and $g^{marg}_{\Lambda} (s,Z_s^{X,Y}, Z_s^Y) = g_{\rho} (s,Z_s^Y) - g_{\rho} (s,Z_s^{Y}-Z^{X,Y}_s)$.  The driver $g_{\Lambda}^{marg}$ satisfies condition~\eqref{eq: glambda -grho} when $g_{\rho}(s,0)=0$ holds for any $s \in [0,T]$.
\end{remark}

%
%
%
%

In the following, we investigate under which conditions a dynamic CAR $(\Lambda_t)_{t\in [0,T]}$ that is induced by a $g_{\Lambda}$-expectation satisfies some further axioms and vice versa.

\begin{proposition} \label{prop: suff-nec condition glambda - axioms}
Let $g_{\Lambda}$ be a driver satisfying the usual non-normalized assumptions and condition~\eqref{eq: glambda -grho}.\smallskip

i) $\Lambda_t(X+c_t;Y)=\Lambda_t(X;Y)-c_t$ for any $X,Y \in L^{\infty}(\mathcal{F}_T)$, $c_t \in L^{\infty}(\mathcal{F}_t)$.

ii) if $g_{\Lambda}(s,0,z^y) =0$ for any $s \in [0,T]$, $z^{y} \in \bR^d$, then $\Lambda_t (0;Y)=0$  for any $t \in [0,T]$ and $Y \in L^{\infty}(\mathcal{F}_T)$.

iii) if $g_{\Lambda}(s,z,z^y) \leq g_{\rho} (s,z)$ for any $s \in [0,T]$ and $z, z^y \in \bR^d$, then $\Lambda_t$ satisfies no-undercut.

iv)
{\color{black}$\Lambda_t$ satisfies monotonicity.}

v) if $\sum_{i=1}^n g_{\Lambda}(s,z^{i},z^y) \leq g_{\Lambda}(s,\sum_{i=1}^n z^{i},z^y)$ for any $s \in [0,T]$, $z^{i}, z^y \in \bR^d$, then $\Lambda_t$ satisfies sub-allocation.

vi) if $g_{\Lambda}(s,z,z^y)$ is convex in $z$ for any $s \in [0,T]$, $z^y \in \bR^d$, then $\Lambda_t$ satisfies weak convexity.\smallskip

\noindent Moreover, the converse implications hold in iii)-iv) if $g_{\Lambda}(s,z,z^y)$ is continuous in $s$.
\end{proposition}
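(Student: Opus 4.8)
The plan is to prove each of the six statements by exploiting the comparison theorem for BSDEs together with the uniqueness of solutions, and to treat the converse implications in (iii)--(iv) by a localization argument at the driver level. Throughout I would use that $\Lambda_t(X;Y)$ is the first component of the solution of the BSDE \eqref{eq: lambdat - g-exp} with terminal condition $-X$ and driver $g_\Lambda(s,\cdot,Z_s^Y)$, where the process $Z_s^Y$ is fixed once $Y$ is fixed.

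For the direct implications the arguments are short. For (i), I would note that if $(\Lambda_t(X;Y),Z^{X,Y})$ solves \eqref{eq: lambdat - g-exp} with terminal value $-X$, then $(\Lambda_t(X;Y)-c_t, Z^{X,Y})$ solves the same BSDE with terminal value $-(X+c_t)$, because $c_t$ is $\mathcal F_t$-measurable and $g_\Lambda$ does not depend on the $y$-variable; uniqueness then gives $\Lambda_t(X+c_t;Y)=\Lambda_t(X;Y)-c_t$ (one must check this for $t$ ranging over $[s,T]$ with $c_t$ replaced by its conditional version, but since the driver is independent of $y$ this is the standard cash-additivity argument for $g$-expectations). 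For (ii), when $g_\Lambda(s,0,z^y)=0$ the pair $(0,0)$ solves \eqref{eq: lambdat - g-exp} with terminal condition $0$, so by uniqueness $\Lambda_t(0;Y)=0$. For (iii), the hypothesis $g_\Lambda(s,z,z^y)\le g_\rho(s,z)$ means that along the solution, $g_\Lambda(s,Z_s^{X,Y},Z_s^Y)\le g_\rho(s,Z_s^{X,Y})$, so the comparison theorem applied to the two BSDEs with the same terminal condition $-X$ — one driven by $g_\Lambda(\cdot,\cdot,Z^Y)$, the other by $g_\rho$ — yields $\Lambda_t(X;Y)\le \rho_t(X)$, which is exactly no-undercut. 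For (iv) monotonicity ($X\le Z \Rightarrow \Lambda_t(X;Y)\ge\Lambda_t(Z;Y)$) is again the comparison theorem: same driver $g_\Lambda(\cdot,\cdot,Z^Y)$, terminal conditions $-X\ge -Z$. For (v), the superadditivity hypothesis on $g_\Lambda$ in the middle variable gives, setting $z^i=Z_s^{Y_i,Y}$ and using linearity of the stochastic integral, that $\sum_i\Lambda_t(Y_i;Y)$ solves a BSDE whose driver is pointwise $\le$ the driver of the BSDE solved by $\Lambda_t(\sum_i Y_i;Y)$ (here one uses $Z_s^{\sum Y_i,Y}=\sum_i Z_s^{Y_i,Y}$, which follows from uniqueness of the martingale representation), so comparison gives sub-allocation. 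For (vi), convexity of $z\mapsto g_\Lambda(s,z,z^y)$ makes the map $X\mapsto\Lambda_t(X;Y)$ convex by the standard BSDE argument (the convex combination of two solutions is a supersolution of the BSDE with the convex-combined terminal condition, then compare).

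For the converse directions in (iii) and (iv) I would argue at the level of the driver, using continuity of $g_\Lambda$ in $s$. Fix $s_0\in[0,T)$, $z_0,z^y_0\in\mathbb R^d$ and suppose, toward a contradiction, that $g_\Lambda(s_0,z_0,z^y_0)>g_\rho(s_0,z_0)$ for (iii) (respectively a violation of the monotonicity-type inequality on the driver for (iv)). One then builds a terminal condition and, for (iv), a comparison pair, so that the corresponding solution processes $Z_s^{X,Y}$ and $Z_s^Y$ pass through $z_0$ and $z^y_0$ near time $s_0$: concretely I would take $Y$ such that $Z_s^Y\approx z^y_0$ on a small interval $[s_0,s_0+\varepsilon]$ and $X$ such that $Z_s^{X,Y}\approx z_0$ there — this is possible by choosing terminal data of the form linear in $B_{s_0+\varepsilon}-B_{s_0}$ modulo the driver, which is exactly the technique used to prove that $g$-expectations determine their drivers (cf. the converse part of Coquet et al.~\cite{coquet-et-al}). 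By continuity in $s$ the driver inequality is then violated on a whole interval of positive $dP\times dt$-measure, and a strict comparison theorem forces a strict violation of no-undercut (resp. monotonicity) at time $s_0$, contradicting the hypothesis.

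The main obstacle is this last step: making rigorous the claim that the solution processes $(Z_s^{X,Y},Z_s^Y)$ can be steered to take prescribed values near a given time, and then upgrading ``driver inequality fails at a point'' to ``functional inequality fails'' via a strict comparison theorem. This requires the representation-type lemma for $g$-expectations (the values of $Z$ at time $s$ can be made arbitrary $\mathcal F_s$-measurable bounded vectors by an appropriate choice of terminal condition, since the driver is Lipschitz and the Brownian filtration supports the martingale representation theorem) together with the strict version of the comparison theorem (if the driver inequality is strict on a set of positive measure and terminal conditions agree, then the solutions are strictly ordered). Both ingredients are classical — the first goes back to the converse direction in Coquet et al.~\cite{coquet-et-al}, the second is the standard strict comparison result for Lipschitz BSDEs (see e.g.~\cite{EPQ,peng-97}) — but assembling them carefully, and checking that the localization respects the dependence of $g_\Lambda$ on the extra parameter $z^y$, is where the real work lies; the other five items are essentially one invocation of comparison or uniqueness each.
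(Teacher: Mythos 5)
Your treatment of the direct implications i)--vi) is essentially the paper's own proof: i) and ii) follow from uniqueness of the BSDE solution and the standard properties of $g$-expectations with $y$-independent drivers, and iii)--vi) are each a single application of the comparison theorem of El Karoui--Peng--Quenez. One correction in (v): the identity $Z_s^{\sum_i Y_i,Y}=\sum_i Z_s^{Y_i,Y}$ that you invoke parenthetically is false in general, since the driver is nonlinear in $z$; fortunately it is not needed. The pair $\bigl(\sum_i\Lambda_t(Y_i;Y),\ \sum_i Z_t^{Y_i,Y}\bigr)$ solves a BSDE whose driver, evaluated along its own solution, equals $\sum_i g_\Lambda(s,Z_s^{Y_i,Y},Z_s^Y)$, which by your superadditivity hypothesis is dominated by $g_\Lambda\bigl(s,\sum_i Z_s^{Y_i,Y},Z_s^Y\bigr)$, i.e.\ by the driver of the BSDE defining $\Lambda_t(\sum_i Y_i;Y)$ evaluated at the same point; comparison then gives sub-allocation directly.

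The genuine gap is in the converse implications for iii)--iv). You sketch the correct strategy (steer $Z^{X,Y}$ and $Z^Y$ to prescribed values near a fixed time via suitable terminal conditions, then upgrade a pointwise violation of the driver inequality to a violation of no-undercut or monotonicity by strict comparison), but you explicitly leave the two key technical steps unproved, and in particular the realization of arbitrary values of the extra parameter $z^y$ through $Z^Y$ --- exactly the point you flag as ``where the real work lies.'' The paper does not carry out this construction: it invokes Theorem 4.1 of Briand, Coquet, Hu, M\'emin and Peng \cite{BCHMP}, the converse comparison theorem for BSDEs, which packages precisely the localization/representation-of-the-generator argument you describe and is where the continuity of $g_\Lambda$ in $s$ enters. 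So to finish, either complete your construction --- which amounts to reproving that theorem, including the $z^y$-realization step, with all the care about the random, $Z^Y$-dependent driver --- or simply cite \cite{BCHMP} as the paper does. As written, the converse direction is an acknowledged sketch rather than a proof, while everything else coincides with the paper's argument.
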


\begin{proof}
i) and ii) are straightforward thanks to the properties of $g$-expectations with a driver independent on the $y$-component (see Peng~\cite{peng-97} and Rosazza Gianin~\cite{RG}). In particular, ii) is due to the fact that $(\Lambda_t(0;Y);Z_t^{0,Y})_t$ with $Z_t^{0,Y} =0$ solves the BSDE providing $\Lambda_t(0;Y)$.

The proof of the sufficient conditions iii)-vi) follows by Comparison Theorem of BSDEs (see Theorem 2.2 of El Karoui et al.~\cite{EPQ}) and by the properties of $g$-expectations with a driver independent on the $y$-component (see Peng~\cite{peng-97} and Rosazza Gianin~\cite{RG}), while the last statement (concerning necessary conditions) follows by Theorem 4.1 of Briand et al.~\cite{BCHMP}.
\end{proof}

Note that conditions in v) and in vi) are only sufficient but not necessary for sub-allocation and weak convexity, respectively.

It is well known that for convex risk measures {\color{black} that are not coherent} the gradient approach fails to satisfy no-undercut. At the level of a convex $g_{\Lambda}$, we have indeed that the sufficient condition in iii) is not satisfied in general because
$$
g^{grad}_{\Lambda}(s,z,z)= \nabla g_{\rho} (s,z) \cdot z= g_{\rho} (s,z) + g^*_{\rho} (s,\nabla g_{\rho} (s,z)) \geq g_{\rho} (s,z)
$$
for any $z \in \bR^d$, where the equality holds iff $g^*_{\rho} (s,\nabla g_{\rho} (s,z))=0$.
\smallskip

As discussed previously, a popular static CAR related to cooperative game theory is the Aumann-Shapley CAR {\color{black} (see Aumann and Shapley~\cite{aumann-shapley}, Tsanakas~\cite{tsanakas}, Centrone and Rosazza Gianin~\cite{centrone-rg})}.
{\color{black} We now introduce} the dynamic (generalized) Aumann-Shapley CAR defined $\omega \times \omega$ as
\begin{equation} \label{eq: dynamic AS}
    \Lambda_t^{AS}(X;Y)=\int_0^1 E_{Q_t^{\gamma Y}}[-X \vert \mathcal{F}_t] \, d\gamma
\end{equation}
and the
    dynamic penalized Aumann-Shapley CAR with penalty function $c$ as
\begin{equation} \label{eq: dynamic AS penalized}
    \Lambda_t^{c-AS}(X;Y)=\int_0^1 \left[ E_{Q_t^{\gamma Y}}[-X \vert \mathcal{F}_t]-c_{t}(Q_t^{\gamma Y}) \right] \, d\gamma=\int_0^1 \Lambda^{sub}_t(X;\gamma Y) \,d\gamma,
\end{equation}
with $Q_t^{\cdot}$ defined as in~\eqref{eq: Qt optimal}.

The following result deals with the dynamic (penalized) Aumann-Shapley and its time-consistency.

\begin{proposition} \label{prop: as-dynamic}
Let $\rho_t$ be a dynamic time-consistent convex risk measure that is subdifferentiable \textcolor{black}{and such that
the map $G_t(\gamma)\triangleq \rho_t(\gamma X)$ is differentiable in $\gamma\in [0,1]$ for any $X \in L^{\infty}(\mathcal{F}_T)$}.

a) $\Lambda_t^{AS}$ is a {\color{black}dynamic CAR}.
 Furthermore,
\begin{equation} \label{eq: lambda AS as expect}
\Lambda_t^{AS}(X;Y)=E_P\left[\left. -\tilde{L}^Y(T;t) X \right|\mathcal{F}_t\right],
\end{equation}
where
\begin{equation} \label{eq: L tilde}
\tilde{L}^Y (T;t) \triangleq \int_0^1 L^{\gamma Y} (T;t) \, d\gamma= \int_0^1 \frac{\mathcal{E} (\partial g(s ,Z_s^{\gamma Y}) \cdot B)(T)}{\mathcal{E} (\partial g(s,Z_s^{\gamma Y}) \cdot B)(t)} \, d\gamma
\end{equation}
and \begin{equation*}
L^{H}(T;t) \triangleq \frac{\frac{dQ_T^H}{dP}}{\frac{dQ_t^H}{dP}}=\exp \left\{-\frac 12 \int_t^T \Vert \partial g(u,Z_u^H)\Vert ^2 \, d u - \int_t^T \partial g(u,Z_u^H) \, dB_u \right\}.
\end{equation*}

b) If $g_{\rho}$ is positively homogeneous (hence $\rho_t$ is coherent), then $\Lambda_t^{AS}$ is time-consistent.

c) The penalized $\Lambda_t^{c-AS}$ is a dynamic audacious CAR (not a dynamic CAR in general) satisfying no-undercut.
\end{proposition}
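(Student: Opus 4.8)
The plan is to handle the three parts in sequence, leaning on Proposition~\ref{prop: existence-subdiff} and Lemma~\ref{rem: subdiff} throughout. For part a), I would first check the defining property $\Lambda_t^{AS}(X;X)=\rho_t(X)$. Writing $G_t(\gamma)\triangleq\rho_t(\gamma X)$, differentiability of $G_t$ in $\gamma$ together with subdifferentiability of $\rho_t$ and Lemma~\ref{rem: subdiff} forces $\partial\rho_t(\gamma X)$ to be a singleton $\{Q_t^{\gamma X}\}$ for (a.e.) $\gamma$, with $G_t'(\gamma)=E_{Q_t^{\gamma X}}[-X\,|\,\mathcal{F}_t]$; hence $\int_0^1 E_{Q_t^{\gamma X}}[-X\,|\,\mathcal{F}_t]\,d\gamma = G_t(1)-G_t(0)=\rho_t(X)-0=\rho_t(X)$, using normalization. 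For the representation \eqref{eq: lambda AS as expect}, I would invoke the explicit optimal scenario from \eqref{eq: Qt optimal-subdiff g}: since $Q_t^{\gamma Y}$ has density given by the stochastic exponential $\mathcal{E}(\partial g(\cdot,Z^{\gamma Y})\cdot B)$, the conditional expectation $E_{Q_t^{\gamma Y}}[-X\,|\,\mathcal{F}_t]$ equals $E_P[-L^{\gamma Y}(T;t)X\,|\,\mathcal{F}_t]$ with $L^{\gamma Y}(T;t)$ the density ratio as displayed; then interchange $\int_0^1 d\gamma$ with the $\mathcal{F}_t$-conditional expectation (Fubini, justified by boundedness of $X$ and of $\partial g$ by the Lipschitz constant $\mu$, which gives uniform integrability of the exponential densities in $\gamma$) to obtain $\Lambda_t^{AS}(X;Y)=E_P[-\tilde L^Y(T;t)X\,|\,\mathcal{F}_t]$.

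For part b), assume $g_\rho$ is positively homogeneous, so $\rho_t$ is coherent and the penalty term vanishes; then $\Lambda_t^{sub}(X;\gamma Y)=E_{Q_t^{\gamma Y}}[-X\,|\,\mathcal{F}_t]=\Lambda_t^{AS}$'s integrand, and moreover $Z_s^{\gamma Y}=\gamma Z_s^Y$ by homogeneity of the BSDE, whence $\partial g(s,Z_s^{\gamma Y})=\partial g(s,Z_s^Y)$ does not depend on $\gamma$ (subgradients of a positively homogeneous convex function are constant along rays). Consequently $Q_t^{\gamma Y}=Q_t^Y$ for all $\gamma$, so $\Lambda_t^{AS}(X;Y)=E_{Q_t^Y}[-X\,|\,\mathcal{F}_t]=\Lambda_t^{sub}(X;Y)$, and time-consistency of $\Lambda_t^{AS}$ reduces to time-consistency of $\Lambda_t^{sub}$, already established in Proposition~\ref{prop: existence-subdiff}. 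For part c), I would note that $\Lambda_t^{c-AS}(X;X)=\int_0^1\Lambda_t^{sub}(X;\gamma X)\,d\gamma=\int_0^1[\rho_t(\gamma X)-(\text{something}\ge 0)]$ — more precisely each $\Lambda_t^{sub}(X;\gamma X)\le\rho_t(X)$ by no-undercut from Proposition~\ref{prop: existence-subdiff}, so averaging gives $\Lambda_t^{c-AS}(X;X)\le\rho_t(X)$, i.e. an audacious CAR; equality fails in general precisely because $\Lambda_t^{sub}(X;\gamma X)\neq\rho_t(X)$ when the penalty is nonzero. No-undercut of $\Lambda_t^{c-AS}$ follows by integrating the no-undercut inequality $\Lambda_t^{sub}(X;\gamma Y)\le\rho_t(X)$ over $\gamma\in[0,1]$.

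The main obstacle I anticipate is the differentiation-under-the-integral / Fubini step in part a): one must justify that $\gamma\mapsto E_{Q_t^{\gamma Y}}[-X\,|\,\mathcal{F}_t]$ is integrable and that $G_t'(\gamma)$ really equals this conditional expectation for a.e. $\gamma$. The hypothesis that $G_t(\gamma)=\rho_t(\gamma X)$ is differentiable is exactly what is needed to pin down the $\mathcal{F}_t$-a.s. derivative via the envelope-type argument in Lemma~\ref{rem: subdiff} (Gateaux differentiability in the ray direction gives uniqueness of the optimal scenario restricted to that direction), but care is required because differentiability is assumed pathwise/$P$-a.s. in $\gamma$ while the integral $\int_0^1 d\gamma$ is deterministic — a monotone-class or dominated-convergence argument using the uniform bound $\|\partial g\|\le\mu$ closes this. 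The remaining verifications (the ray-invariance of subgradients in part b, and the inequality chains in part c) are routine once the representation in part a is in hand.
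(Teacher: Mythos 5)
Your proposal is correct and follows essentially the paper's own argument: in a) the same sandwich of the one-sided directional derivatives (note that differentiability of $G_t$ does not make $\partial\rho_t(\gamma X)$ a singleton — it only forces every subgradient to give the same value $E_{Q_t^{\gamma X}}[-X\,|\,\mathcal{F}_t]=G_t'(\gamma)$ in the direction $X$, which is all you actually use), then $G_t(1)-G_t(0)=\rho_t(X)$ by normalization and Fubini--Tonelli with the explicit densities from \eqref{eq: Qt optimal-subdiff g}; in c) the same integration of the no-undercut inequality and of the nonnegative penalty term. The only small deviation is in b), where you reduce time-consistency of $\Lambda_t^{AS}$ to that of $\Lambda_t^{sub}$ via $Q_t^{\gamma Y}=Q_t^{Y}$, whereas the paper verifies directly the multiplicative property $\tilde L^Y(T;s)=\tilde L^Y(T;t)\,\tilde L^Y(t;s)$; both routes rest on the same facts $Z_s^{\gamma Y}=\gamma Z_s^{Y}$ and $\partial g(u,\gamma z)=\partial g(u,z)$, so the content is the same.
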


\begin{proof}

a) We start proving that $\Lambda_t^{AS}$ is a CAR and later that~\eqref{eq: lambda AS as expect} is satisfied. This proof is in line but extends the one of Corollary 4.1 of Kromer and Overbeck~\cite{kromer-overbeck-bsde} (to non-differentiable risk measures and to the expression of $\Lambda_t$) and of Proposition 9(a) of Centrone and Rosazza Gianin~\cite{centrone-rg} (to the dynamic case).

\textcolor{black}{Let $Q_t^{\gamma X} \in \partial \rho_t(\gamma X)$. Then, by definition,
\begin{eqnarray*}
G'_{t,-} (\gamma) &= & \lim_{h\nearrow 0} \frac{\rho_t(\gamma X + h X)- \rho_t (\gamma X)}{h} \leq E_{Q_t^{\gamma X}} [-X | \mathcal{F}_t] \\
G'_{t,+} (\gamma) &= & \lim_{h\searrow 0} \frac{\rho_t(\gamma X + h X)- \rho_t (\gamma X)}{h} \geq E_{Q_t^{\gamma X}} [-X | \mathcal{F}_t].
\end{eqnarray*}
Under the assumption of differentiability of $G_t(\gamma)$
 for every $\gamma \in [0,1]$, it follows that $G'_{t,-} (\gamma) =G'_{t,+} (\gamma)$
and hence}
the following equality holds {\color{black}true}
\begin{equation*}
\int_0^1 G'_{t,-} (\gamma) d \gamma = \int_0^1 E_{Q_t^{\gamma X}} [-X | \mathcal{F}_t] d \gamma = \int_0^1 G'_{t,+} (\gamma) d \gamma.
\end{equation*}
By normalization of $\rho_t$, $\int_0^1 E_{Q_t^{\gamma X}} [-X | \mathcal{F}_t] d \gamma=G_t(1)-G_t(0)=\rho_t(X)$ and, consequently, $\Lambda_t^{AS}$ is a dynamic CAR.

Furthermore, by Proposition~\ref{prop: existence-subdiff}
and~\eqref{eq: Qt optimal-subdiff g}, we know that $E_P\left[\frac{dQ_t^X}{dP}\right]= \mathcal{E} (\partial g(t,Z_t^X) \cdot B)=$\linebreak$=\exp \left\{-\frac 12 \int_0^t \Vert \partial g(u,Z_u^X)\Vert ^2 \, d u - \int_0^t \partial g(u,Z_u^X) \, dB_u \right\}$. By Fubini-Tonelli Theorem, this implies that
\begin{eqnarray*}
\Lambda_t^{AS}(X;Y)&=& \int_0^1 E_{Q_t^{\gamma Y}}[-X \vert \mathcal{F}_t] \, d\gamma \\
&=& \int_0^1 E_{P}[\left. -X L^{\gamma Y} (T;t) \right| \mathcal{F}_t] \, d\gamma \\
&=& E_{P}\left[\left. \int_0^1(-X L^{\gamma Y} (T;t)) \, d\gamma \right| \mathcal{F}_t \right] \\
&=& E_{P}\left[\left. -X \int_0^1 L^{\gamma Y} (T;t) \, d\gamma \right| \mathcal{F}_t \right] \\
&=& E_{P}\left[\left. -X  \tilde{L}^{Y} (T;t) \right| \mathcal{F}_t \right].
\end{eqnarray*}
\smallskip

b) If $g_{\rho}$ is positively homogeneous (hence $\rho_t$ is coherent), then
\begin{eqnarray}
\tilde{L}^Y (T;t) &=& \int_0^1 \frac{\mathcal{E} (\partial g(s ,Z_s^{\gamma Y}) \cdot B)(T)}{\mathcal{E} (\partial g(s,Z_s^{\gamma Y}) \cdot B)(t)} \, d\gamma \notag \\
&=& \int_0^1 \exp \left\{-\frac 12 \int_t^T \Vert \partial g(u,Z_u^{\gamma Y})\Vert ^2 \, d u - \int_t^T \partial g(u,Z_u^{\gamma Y}) \, dB_u \right\} \, d\gamma \notag \\
&=& \int_0^1 \exp \left\{-\frac 12 \int_t^T \Vert \partial g(u,\gamma Z_u^{ Y})\Vert ^2 \, d u - \int_t^T \partial g(u,\gamma Z_u^Y) \, dB_u \right\} \, d\gamma \label{eq: TC1}\\
&=& \int_0^1 \exp \left\{-\frac 12 \int_t^T \Vert \partial g(u, Z_u^{ Y})\Vert ^2 \, d u - \int_t^T \partial g(u, Z_u^Y) \, dB_u \right\} \, d\gamma \label{eq: TC2}\\
&=&\exp \left\{-\frac 12 \int_t^T \Vert \partial g(u, Z_u^{ Y})\Vert ^2 \, d u - \int_t^T \partial g(u, Z_u^Y) \, dB_u \right\}, \label{eq: TC3}
\end{eqnarray}
where~\eqref{eq: TC1} is due to $Z_s^{\gamma Y}=Z^{\gamma}_s$ and~\eqref{eq: TC2} to $\partial g(u,\gamma Z_u^{ Y})=\partial g(u, Z_u^{ Y})$, both satisfied for any $\gamma \in [0,1]$ because of positive homogeneity of $g(u,z)$ in $z$. By~\eqref{eq: TC3}, it then follows that $\tilde{L}^Y (T;s)= \tilde{L}^Y (T;t) \cdot \tilde{L}^Y (t;s)$ for any $s\leq t \leq T$, hence time-consistency of $\Lambda_t^{AS}$.
\smallskip

c)
No-undercut follows from
\begin{eqnarray*}
\Lambda_t^{c-AS}(X;Y)&=& \int_0^1 \left[ E_{Q_t^{\gamma Y}}[-X \vert \mathcal{F}_t]-c_{t}(Q_t^{\gamma Y}) \right] d\gamma \\
&\leq & \int_0^1 \rho_t(X) d\gamma =\rho_t(X)
\end{eqnarray*}
for any $X,Y \in L^{\infty}(\mathcal{F}_T), t \in [0,T]$.
Finally, from a) it follows that
\begin{equation*}
\Lambda_t^{c-AS}(X;X)= \Lambda_t^{AS}(X;X)-\int_0^1 c_{t}(Q_t^{\gamma X})  d\gamma = \rho_t(X) -\int_0^1 c_{t}(Q_t^{\gamma X})  d\gamma
\end{equation*}
in general does not coincide with $\rho_t(X)$, implying that $\Lambda_t^{c-AS}$ is not a dynamic CAR but only an audacious dynamic CAR.
\end{proof}
\smallskip

We conclude this section with an example of different dynamic CARs associated to the same risk measure. Some related remarks and discussions will follow.
\color{black}
It is worth emphasizing that, although the example below deals with differentiable $g_{\Lambda}$ and $g_{\rho}$ (hence with Gateaux differentiable risk measures), there are several examples of CARs induced by $g$-expectations and that are only subdifferentiable, e.g., the one corresponding to $g_{\rho}(s,z)=c \vert z \vert$ and $g_{\Lambda}(s,z,z^y)=c (\vert z^y \vert + \vert z^y -z \vert)$. Although the case below goes beyond the Lipschitz case, we have decided to focus on it mainly because it corresponds to one among the few examples of BSDEs where one has an explicit solution (see, e.g.,~\cite{barrieu-elkaroui}), but also because the entropic risk measure is quite popular and widely used also due to its relation to utility theory.
\normalcolor

\begin{example}[Entropic risk measures with different dynamic CARS] \label{ex: entropic- different CARS}

Let $\rho_t$ be a dynamic entropic risk measure, i.e., $\rho_t(X)=\lambda \ln \left(E_P [\exp(-\frac{X}{\lambda}) | \mathcal{F}_t] \right)$ where $\lambda >0$ denotes the risk aversion parameter. \color{black}It is well known that the dynamic entropic risk measure solves a BSDE with driver ${\color{black}g_{\rho}}(s,z)=\frac{1}{2 \lambda} \Vert z \Vert ^2$ (see, e.g.,~\cite{barrieu-elkaroui}).\normalcolor
\smallskip

In the following, we consider and compare different dynamic CARs associated to the entropic risk measure above by considering the approach presented before even if $g_{\color{black} \rho}$ is not Lipschitz in $z$.\medskip

\emph{Dynamic gradient CAR.} Since $g_{\color{black}\rho}$ is differentiable in $z$, the dynamic gradient CAR is well-defined and corresponds to
\begin{equation*}
\Lambda_t^{grad}(X;Y)= E_P \left[\left. {\color{black}-X \frac{e^{-\frac{Y}{\lambda}}}{E_P \left[\left.e^{-\frac{Y}{\lambda}} \right| \mathcal{F}_t \right]}} \right| \mathcal{F}_t \right],
\end{equation*}
(see \cite{kromer-overbeck-bsde} and \cite{mabitsela-etal}), solving a BSDE with driver $g_{\Lambda}(s,z,z^y)=\nabla g_{\rho}(s,z^y) \cdot z= \frac{1}{\lambda} z \cdot z^y$.
\smallskip

Consider now the following \emph{two alternative drivers} $g_{\Lambda}$:
\begin{equation*}
g^1_{\Lambda}(s,z,z^y)=c(z-z^y) + \frac{1}{2 \lambda} \Vert z^y \Vert^2; \quad g^2_{\Lambda}(s,z,z^y)=\frac{1}{2 \tilde{\lambda}}\Vert z-z^y \Vert ^2 + \frac{1}{2 \lambda} \Vert z^y \Vert^2
\end{equation*}
for some $c, \tilde{\lambda}>0$ where $\tilde{\lambda}$ can be seen as a second risk aversion parameter.
Note that $g^1_{\Lambda}(s,z,z)=g^2_{\Lambda}(s,z,z)=g_{\rho}(s,z)${\color{black}, $g^1_{\Lambda}$ is a Lipschitz driver (in $z$), while $g^2_{\Lambda}$ is a quadratic driver (in $z$) of a similar form of $g_{\rho}$, hence guaranteeing existence and uniqueness of the solution}.

Starting with $g^1_{\Lambda}$,
\begin{eqnarray*}
&&\Lambda_t^{1}(X;Y)-\rho_t(Y) \\
&=& -(X-Y)+ \int_t ^T \left[c(Z_s^{X,Y}-Z_s^Y) + \frac{1}{2 \lambda}  \Vert Z_s^Y \Vert ^2 \right] \, ds - \int_t^T Z_s^{X,Y} \, dB_s +\\
&& - \int_t ^T  \frac{1}{2 \lambda}  \Vert Z_s^Y \Vert ^2  \, ds + \int_t^T Z_s^Y \, dB_s \\
&=& -(X-Y)+ \int_t ^T \left[c(Z_s^{X,Y}-Z_s^Y)\right]  \, ds - \int_t^T \left( Z_s^{X,Y} - Z_s^Y \right) \, dB_s,
\end{eqnarray*}
hence
\begin{eqnarray*}
\Lambda_t^{1}(X;Y)&=&\rho_t(Y) - E_{Q_{t,c}} [-(Y-X) | \mathcal{F}_t] \\
&=&\lambda \ln \left(E_P \left[ \left. \exp\left(-\frac{Y}{\lambda}\right) \right| \mathcal{F}_t \right] \right)- E_{Q_{t,c}} [-(Y-X) | \mathcal{F}_t],
\end{eqnarray*}
with $\frac{dQ_{t,c}}{dP}= \mathcal{E} (c \cdot B)= \exp \left\{-\frac 12 c ^2 t - c \, B_t \right\}$.

Concerning $g^2_{\Lambda}$, instead,
\begin{eqnarray*}
&&\Lambda_t^{2}(X;Y)-\rho_t(Y) \\
&=& -(X-Y)+ \int_t ^T \left[\frac{1}{2 \tilde{\lambda}}\Vert Z_s^{X,Y}-Z_s^Y \Vert ^2 + \frac{1}{2 \lambda} \Vert Z_s^Y \Vert ^2 \right] \, ds - \int_t^T Z_s^{X,Y} \, dB_s +\\
&& - \int_t ^T  \frac{1}{2 \lambda} \Vert Z_s^Y \Vert^2  \, ds + \int_t^T Z_s^Y \, dB_s \\
&=& -(X-Y)+ \int_t ^T \left[\frac{1}{2 \tilde{\lambda}} \Vert Z_s^{X,Y}-Z_s^Y \Vert^2\right]  \, ds - \int_t^T \left( Z_s^{X,Y} - Z_s^Y \right) \, dB_s.
\end{eqnarray*}
Since given a constant $a>0$, a final condition $\xi$ and a fixed process $(b_t)_{t \in [0,T]}$ the BSDE
\begin{equation*}
Y_t= -\xi+ \int_t^T \frac{1}{2a} \Vert Z_s - b_s \Vert ^2 \, ds -\int_t ^T (Z_s-b_s) \, dB_s
\end{equation*}
admits a unique solution with first component $Y_t=a \ln \left(E_P [\exp(-\frac{\xi}{a}) | \mathcal{F}_t] \right)$, $\Lambda_t^{2}$ is then given by
\begin{eqnarray*}
\Lambda_t^{2}(X;Y)&=&\rho_t(Y)+ \tilde{\lambda} \ln \left(E_P \left[\left. \exp\left(-\frac{X-Y}{\tilde{\lambda}}\right) \right| \mathcal{F}_t \right] \right) \\
&=& \ln \left[ \left(E_P \left[\left. \exp\left(-\frac{Y}{\lambda}\right) \right| \mathcal{F}_t \right] \right)^{\lambda} \left(E_P \left[\left. \exp\left(-\frac{X-Y}{\tilde{\lambda}}\right) \right| \mathcal{F}_t \right]\right)^{\tilde{\lambda}} \right].
\end{eqnarray*}

As seen above, we can associate several dynamic CARs to the entropic risk measure (as well as to any risk measure). In particular, the choice of $g_{\Lambda}$ and of the corresponding dynamic CAR may reflect the preference (or the risk aversion) of the agent/intermediary/investor. This financial interpretation is particularly evident for $g_{\Lambda}^2$ depending on the two risk aversion parameters $\lambda, \tilde{\lambda}$ where $\lambda$ can be seen as the risk aversion at the level of the whole portfolio $Y$ while $\tilde{\lambda}$ at the level of the rest of the portfolio with the exclusion of $X$. In particular, $g_{\Lambda}^2 $ incorporates the drivers of two different entropic terms with different risk aversion and, consequently, the corresponding CAR is somehow related to different entropic risk measures acting on $Y$ and on $Y-X$.
\end{example}

Note that $g_{\Lambda}^1, g_{\Lambda}^2$ of the previous example are particular cases of $g^f_{\Lambda}(s,z,z^y)=f(s,z^y,z-z^y) + g_{\rho} (s,z^y)$ with $f(s,z^y,0)=0$. In such a case,
\begin{eqnarray*}
\Lambda_t^{f}(X;Y) &=& -X+ \int_t ^T \left[f(s,Z_s^Y, Z_s-Z_s^Y) + g_{\rho}(s,Z_s^Y) \right] \, ds - \int_t^T  Z_s  \, dB_s \\
&=& -(X-Y)+ \int_t ^T f(s,Z_s^Y, Z_s-Z_s^Y)  \, ds - \int_t^T \left[ Z_s - Z_s^Y \right] \, dB_s +\\
&& -Y+ \int_t ^T g_{\rho}(s,Z_s^Y)  \, ds - \int_t^T Z_s^Y \, dB_s \\
&=& \rho_t(Y) -(X-Y)+ \int_t ^T (Z_s-Z_s^Y) \frac{f(s,Z_s^Y, Z_s-Z_s^Y)}{Z_s-Z_s^Y}  \, ds - \int_t^T \left( Z_s - Z_s^Y \right) \, dB_s \\
&=& \rho_t(Y) - E_{Q_{t,f}} [-(Y-X) | \mathcal{F}_t]
\end{eqnarray*}
with $\frac{dQ_{t,f}}{dP}= \mathcal{E} (\frac{\Delta f}{\Delta z} \cdot B)$ and $\frac{\Delta f}{\Delta z}=\frac{f(s,Z_s^Y, Z_s-Z_s^Y)}{Z_s-Z_s^Y}$. In other words, when the driver of the CAR is given by that of the risk measure plus an additional term depending on the difference $z-z^y$, the dynamic CAR can be obtained by the riskiness of the whole portfolio with a correction term depending on $Y-X$, that is, on the rest of the portfolio with the exclusion of $X$. Roughly speaking, this approach is somehow similar to that of marginal contributions where the contribution of $X$ on the whole portfolio $Y$ is taken into account.
\smallskip

The main aim of this paper has been to provide an axiomatic study of dynamic CARs and to introduce a general formulation of them in a BSDE-framework, both under the subdifferentiability assumption, weaker than Gateaux differentiability. The following remark illustrates why the subdifferentiability assumption is rather weak for $g_{\rho}$ and what is the impact of subdiffertiability of $g_{\Lambda}$ on $\Lambda_t$ and on $\rho_t$.

\begin{remark} \label{rem: subdif g}
\color{black}
Although $g_{\rho}$ is assumed to be convex in $z \in \mathbb{R}^n$ (because of convexity of $\rho_t$), $g_{\Lambda}(s,z,z^y)$ is not necessarily convex in $z$. Consequently, while $\partial g_{\rho}(t,z) \neq \emptyset$ on the relative interior of the domain of $g_{\rho}$ (see Rockafellar~\cite{rockafellar}), $g_{\Lambda}(s,z,z^y)$ is not necessarily subdifferentiable in $z$.

We claim that subdifferentiability of $g_{\Lambda}$ in $z$, however, implies subdifferentiability of $g_{\rho}$, $\rho_t$ and $\Lambda_t$, where that of $\Lambda_t$ should be understood as
\begin{multline*}
\partial \Lambda_t(X;Y)= \left\{Q_t \in \mathcal{Q}_{t}: \Lambda_t(H;Y) \geq \right.\\
\left. \Lambda_t(X;Y) + E_{Q_t}[-(H-X) | \mathcal{F}_t] \mbox{ for all } H \in L^\infty(\mathcal{F}_T)\right\} \neq \emptyset
\end{multline*}
for any $Y \in L^{\infty}(\mathcal{F}_T)$.

In fact, subdifferentiability of $g_{\rho}$ follows immediately since $g_{\rho}(s,z)= g_{\Lambda}(s,z,z)$.

Subdifferentiability of $g_{\Lambda}$, then, implies that
\begin{eqnarray*}
&& \Lambda_t(H;Y)-\Lambda_t(X;Y) \\
&=& -(H-X)+ \int_t ^T \left[g_{\lambda}(s,Z_s^{H,Y}, Z_s^Y)-g_{\lambda}(s,Z_s^{X,Y}, Z_s^Y) \right] \, ds - \int_t^T \left[ Z^{H,Y}_s - Z_s^{X,Y} \right] \, dB_s\\
&\geq & -(H-X)+ \int_t ^T \partial g_{\lambda}(s,Z_s^{X,Y}, Z_s^Y) \left[Z_s^{H,Y} - Z_s^{X,Y} \right] \, ds - \int_t^T \left[ Z^{H,Y}_s - Z_s^{X,Y} \right] \, dB_s \\
&=& E_{Q_{t}^{Y,X}} [-(H-X) | \mathcal{F}_t],
\end{eqnarray*}
for any $H, X, Y \in L^{\infty}(\mathcal{F}_T)$, where $E\left[\left.\frac{dQ_t^{Y,X}}{dP} \right| \mathcal{F}_t \right]= \mathcal{E} (\partial g_{\lambda}(s,Z_s^{X,Y}, Z_s^Y) \cdot B)$. It then follows that $Q_t^{Y,X} \in \partial \Lambda_t(X;Y)$, so both $\Lambda_t(X;Y)$ and $\rho_t(Y)$ are subdifferentiable.
\normalcolor
\end{remark}

\section{Conclusions} \label{sec: conclus}

In this paper, we have introduced a general axiomatic approach to
dynamic capital allocations as well as an approach suitable for
risk measures induced by $g$-expectations, by going beyond the gradient approach and by weakening the Gateaux differentiability condition.
%

Furthermore, for risk measures induced by $g$-expectations we have seen that it is possible to associate several dynamic capital allocations induced by a $g_{\Lambda}$-expectation, where the choice of the driver $g_{\Lambda}$ and of the corresponding dynamic
CAR may reflect the preferences of the intermediary/investor.
Vice versa, instead, given a dynamic family of CARs induced by a
$g$-expectation, the associated dynamic risk measure is
uniquely determined.
\bigskip
\bigskip

\color{black}
\textit{Acknowledgments.} We wish to thank two anonymous Referees for their comments that contributed to improve the paper. We are also grateful to Alessandro Doldi, Marco Frittelli, and Marco Zullino for their suggestions and discussions.
\normalcolor

\end{document}